\subjclass[2010]{Primary: 55R70, 55P40, 57R40; Secondary: 57R19}
\newtheorem{thm}{Theorem}[section]  
\newtheorem*{un-no-thm}{Theorem}
\newtheorem{cor}[thm]{Corollary}     
\newtheorem{lem}[thm]{Lemma}         
\newtheorem{prop}[thm]{Proposition}
\newtheorem{bigthm}{Theorem}
\newtheorem{bigadd}[bigthm]{Addendum}
\theoremstyle{definition}
\newtheorem{defn}[thm]{Definition}   
\theoremstyle{definition}
\theoremstyle{definition}
\theoremstyle{remark}
\newtheorem{rem}[thm]{Remark}
\newtheorem*{ack}{Acknowledgement}
\newtheorem*{out}{Outline}
\newtheorem*{intro-rem}{Remark}
\newtheorem*{intro-rems}{Remarks}
\newtheorem{ex}[thm]{Example}
\DeclareMathOperator*{\hocolim}{hocolim}
\DeclareMathOperator*{\colim}{colim}
\begin{document}\
\title{Charged Spaces}
\date{\today} 
\author{John R. Klein} 
\address{Department of Mathematics, Wayne State University,
Detroit, MI 48202} 
\email{klein@math.wayne.edu} 
\author{John W. Peter} 
\address{Department of Mathematics, Utica College,
Utica, NY 13502}
\email{jwpeter@utica.edu}
\begin{abstract} Let $C$ be a model category with an initial object 
$\emptyset$ and functorial factorizations. Let $S\! : C \to C$ be the suspension functor.
An object $X$ of $C$ is said to be {\it charged} if it comes equipped
with a map $S\emptyset \to X$.
If $Y$ is any object of $C$, then $SY$ has a preferred charge, given by 
applying suspension to the map $\emptyset \to Y$. This motivates the question of whether 
a given charged object is a suspension up to a weak equivalence in a way that preserves  charge structures. 
We study this question in the context
of spaces over a given space, where we give a complete obstruction in a certain metastable range. As an application we show how this can be used to study when an embedding 
into a smooth manifold of the form $N\times I$ compresses to an embedding into $N$.
\end{abstract}
\thanks{The first author is partially supported by the 
National Science Foundation}
\maketitle
\setlength{\parindent}{15pt}
\setlength{\parskip}{1pt plus 0pt minus 1pt}
\def\Top{\bold T\bold o \bold p}
\def\wTop{\text{\rm w}\bold T}
\def\wT{\text{\rm w}\bold T}
\def\vo{\varOmega}
\def\vs{\varSigma}
\def\smsh{\wedge}
\def\esmsh{\,\hat\wedge\,}
\def\flush{\flushpar}
\def\dbslash{/\!\! /}
\def\:{\colon\!}
\def\Bbb{\mathbb}
\def\bold{\mathbf}
\def\cal{\mathcal}
\def\orb{\cal O}
\def\hoP{\text{\rm ho}P}

\setcounter{tocdepth}{1}
\tableofcontents
\addcontentsline{file}{sec_unit}{entry}

\section{Introduction \label{sec:intro}}

If $Y$ is a topological space then its {\it un}reduced suspension 
\[
SY = C_-Y \cup_Y C_+Y 
\] has
two preferred basepoints 
\[
-,+ \in SY
\]
given by the vertices of each cone $C_\pm Y$.
We say that a space $X$ is  charged if it comes equipped with 
two base points, which we represent as a map $S^0 \to X$, in which $S^0 = \{-1,+1\}$.
The map $S^0 \to X$ is called the structure map of $X$. A space equipped with a choice of 
charge is called a {\it charged space}.

Let $T$ be the category of spaces and let $T(S^0 \to \ast)$
be the category whose objects are charged spaces, where a morphism is a map 
of underlying spaces that is compatible with the structure maps. A morphism of charged
spaces is said to be a {\it weak equivalence} if it is a weak homotopy equivalence when considered as
a map of spaces. Unreduced suspension
can be regarded as a functor 
\[
S\: T \to T(S^0 \to \ast)\, .
\]
The {\it (unreduced) desuspension problem} for $X$ asks whether there is a space $Y$ and a 
weak equivalence of charged spaces $SY \simeq X$. This is not generally the same as the 
corresponding desuspension problem for based spaces: for example, the zero sphere $S^0$ is
the unreduced suspension of the empty space, but it is not weak equivalent to the reduced suspension of any based space.

We are really interested in a fiberwise version of the desuspension problem.
In order to formulate it, 
fix  a map of spaces $f\: A \to B$ and let
 \[
 T(A@> f >> B)
 \]
 be the category of spaces which factorize $f$. This is the category whose objects
 are triples $(Y,r,s)$ in which $Y$ is a space, and 
 $r\: Y \to B$, $s\: A \to Y$ are maps
 such that $f = r\circ s$. To avoid clutter, it is standard to omit the structure maps $r,s$ from the notation, letting $Y$ refer to $(Y,r,s)$. 
  When $f$ is understood, we
 often denote the category by $T(A\to B)$. Throughout this paper, we will
 always assume that $B$ is a path connected space.

 A morphism $(Y,r,s) \to (Y',r',s')$ is a map of spaces $u\: Y \to Y'$ such that $r = r'\circ u$ and $s' = u\circ s$.  A morphism $Y \to Y'$ is said to be a {\it weak equivalence}
  if it is a weak homotopy equivalence of underlying spaces. More generally, $Y$ and $Y'$
  are {\it weakly equivalent}, written $Y \simeq Y'$, if there is a finite zig-zag of weak equivalences connecting $Y$ to $Y'$. By slight abuse of language, we say in this case
  there is a weak equivalence $Y \simeq Y'$.


 In the special case of the projection map $B\times S^0 \to B$, an object $X \in T(B\times S^0 \to B)$ is called a {\it fiberwise charged space} over $B$.
Consider the unreduced fiberwise suspension functor
 \begin{equation} \label{eqn:sb}
 S_B\: T(\emptyset \to B) \to T(B\times S^0 \to B)
 \end{equation}
which maps an object $Y$ to the object
\[
S_BY = (B \times \{-1\}) \,\, \cup Y \times [-1,1] \cup \,\, (B\times \{1\})\, ,
\]
where the right side is the double mapping cylinder of the structure map $Y \to B$ with itself.
The map $B\times S^0 \to S_B Y$ is given by the two summands appearing in the mapping cylinder.
We are now in the position of being able state the main problem of interest.
\medskip

\noindent {\it Fiberwise Desuspension Problem:} Let $X \in T(B\times S^0 \to B)$
be an object. 
Find an object $Y \in T(\emptyset \to B)$ and a weak equivalence of fiberwise
charged spaces
\[
X\simeq S_B Y \, .
\]


\begin{rem} This problem is about the degree to which the {\it un}reduced fiberwise
suspension functor $S_B\: T(\emptyset \to B) \to T(B\times S^0\to B)$ is 
surjective up to weak equivalence. 
At the risk of belaboring the point, we wish to emphasize that this problem
is not the same as the based version. To see this, we set
\[
R(B) = T(B @>\text{id} >> B)\, .
\]
This is sometimes called the category
of {\it retractive spaces} over $B$.
The based version of the desuspension problem
considers the extent to which the
{\it reduced} fiberwise suspension  functor 
\[
\Sigma_B\: R(B) \to R(B)
\]
is surjective on objects up to weak equivalence (for the definition of 
$\Sigma_B$ see \S\ref{sec:prelim}). A solution $Y$ to the unbased version of
the fiberwise suspension problem
need not admit a section $B\to Y$, whereas of course a solution to the based version 
comes equipped with a section.

Even in the case when $X \in T(B\times S^0 \to B)$ is in the image of the forgetful
functor $R(B) \to T(B\times S^0 \to B)$ (where the structure map is induced 
by the projection $B\times S^0 \to B$), it could well be the case that $X$ can be written in the form $S_B Y$ up to weak equivalence, but $X$ might not be of the form $\Sigma_B Z$ up to
weak equivalence. 
\end{rem}

\begin{ex} Let $X$ be the Klein bottle. This fibers over $S^1$, with fiber $S^1$,
 and it is the unreduced fiberwise suspension of the degree two map $\times 2\: S^1 \to S^1$.
Moreover $X \in T(S^1\times S^0 \to S^1)$ lifts to an object of $R(S^1)$.

However, $X$ is not weakly equivalent to the
reduced fiberwise suspension of an object $Y\in R(S^1)$.  
If it were, then $Y \to S^1$ would be weakly equivalent to a fibration over $S^1$ equipped 
with section and fiber $S^0$. Such a fibration is always fiber homotopically trivial and this would imply
that $X$ is weak homotopy equivalent to a torus. This yields a contradiction, so $X$ is not
a reduced fiberwise suspension up to weak equivalence.
\end{ex}

 In \cite{Klein_haef} the first author proved a Freudenthal suspension theorem for the functor
 \eqref{eqn:sb}. In the current paper we will extend this result to 
 a certain metastable range where there will be an obstruction. To formulate the main
 result, we say that $X\in T(B\times S^0 \to B)$ is {\it $r$-connected}
  if the structure map $X \to B$ is 
 $(r+1)$-connected. We say that $X$ has {\it dimension $\le k$} if $X$ can be obtained up to weak equivalence from 
 $B\times S^0$ by attaching cells of dimension $\le k$. In this case we write
 $\dim X \le k$.
 
The retractive space category $R(B)$ is a pointed simplicial model category (\S\ref{sec:prelim}).
  Hence for objects $U,V \in R(B)$
 we can talk about the abelian group of stable retractive fiberwise homotopy classes
 \[
 \{U,V\}_{R(B)}\, .
 \]
 The trivial element is represented by the class
 of the composite $U \to B \to V$.

To any object $X \in T(B\times S^0\to B)$, we can associate a pair of objects 
\[
i_-X,i_+X \in R(B)
\]
in which $i_-X$ is just $X$ with section $B \to X$ given by 
$B \times -1 \to  B \times S^0 \to B$, and $i_+X$ is similarly defined by restricting to 
$B \times +1 \to B \times S^0$. The category $R(B)$ has internal (fiberwise) smash products, and we therefore  consider the object $i_+X \smsh_B i_-X \in R(B)$. 
We will also need to consider the object of $R(B)$ given by the amalgamated union
\[
X^+ = X \cup_{B\times S^0} B\, .
\]
In \S\ref{sec:coH} we construct a {\it fiberwise reduced diagonal map,}
\begin{equation} \label{eqn:reduced_diagonal} 
\tilde \Delta\: X^+ \to i_+X \smsh_B i_-X\, ,
\end{equation}
which is a morphism of $R(B)$.
Note that one should ``derive'' these constructions
to ensure homotopy invariance. 
To obtain homotopy invariance for 
\eqref{eqn:reduced_diagonal},  
it suffices to assume at the outset that $B\times S^0 \to X$
is a cofibration and $X\to B$ is a fibration. 
In the following discussion, this will be assumed.

It is not difficult to check that the homotopy class of $\tilde \Delta$ is trivial
whenever $X$ is weakly equivalent to  a fiberwise suspension $S_BY$. Our first main result provides a partial converse, which one can view as an unbased fiberwise version of a result of Berstein-Hilton and Ganea \cite{Berstein-Hilton}, \cite{Ganea}.

 \begin{bigthm} \label{bigthm:main}  Let  $X \in T(B\times S^0\to B)$ be an object.
 Assume $X$ is $r$-connected,
 $\dim X \le 3r$ and 
 \[
 [\tilde \Delta] \in \{X^+,i_+X \smsh_B i_-X\}_{R(B)}
 \]
 is trivial. Then 
 $X \simeq S_B Y$. 
 \end{bigthm}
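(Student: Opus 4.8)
The plan is to mimic the classical Berstein–Hilton–Ganea argument, adapted to the unbased fiberwise setting, using the fiberwise James filtration / James–Hopf construction. First I would reduce to the based setting as much as possible by splitting off the two summands $B \times \{\pm 1\}$: the object $X^+ = X \cup_{B\times S^0} B$ together with its two sections carries essentially all the information, and $X$ can be reconstructed from $X^+$ (with its retractive structure) up to weak equivalence once we know $X^+$ is a reduced fiberwise suspension $\Sigma_B W$. So the real content is to show that $X^+$, which is $r$-connected and of dimension $\le 3r$ in $R(B)$, desuspends as a retractive space in the metastable range, \emph{provided} a suitable obstruction vanishes — and then to identify that obstruction with $[\tilde\Delta]$.

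The key steps, in order, are: (1) Build the fiberwise James/Hopf invariant. If $X^+ \simeq \Sigma_B W$ then the second James–Hopf map $h_2 \colon \Sigma_B W \to \Sigma_B(W \smsh_B W)$ is defined; the point is to run this in reverse. Using that $\dim X^+ \le 3r$ and $X^+$ is $r$-connected, obstruction theory shows the only obstacle to constructing a "partial desuspension" lives in a single cohomology-type group, and the fiberwise EHP/Freudenthal theorem of \cite{Klein_haef} handles the stable range; the metastable correction term is exactly a secondary operation measured by the fiberwise reduced diagonal. (2) Identify this obstruction with $[\tilde\Delta]$. Here one shows that the map $\tilde\Delta \colon X^+ \to i_+X \smsh_B i_-X$ agrees, up to the identifications $i_\pm X \simeq X^+$ in the relevant range and up to fiberwise suspension, with the composite of the James–Hopf invariant and the suspension comparison map; this is where the hypothesis that the \emph{stable} class in $\{X^+, i_+X\smsh_B i_-X\}_{R(B)}$ vanishes gets used, since the obstruction a priori lives in an unstable group but the metastable dimension hypothesis $\dim X \le 3r$ forces it into the stable range where it is detected by $[\tilde\Delta]$. (3) Given $[\tilde\Delta] = 0$, choose a nullhomotopy, feed it into the obstruction-theoretic construction to produce $Y \in T(\emptyset \to B)$ together with a map $S_B Y \to X$, and check it is a weak equivalence by comparing homology fiberwise (both sides are $r$-connected of dimension $\le 3r$ and the map is an isomorphism on $\pi_{r+1}$ and a surjection in the top range, so a Whitehead-type argument in $T(B\times S^0 \to B)$ finishes it).

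**Main obstacle.** The hard part will be step (2): pinning down the precise relationship between the a priori obstruction to desuspension (which lives in an unstable fiberwise homotopy group and is defined via the James–Hopf construction) and the stable class $[\tilde\Delta]$ of the fiberwise reduced diagonal. One must check that in the range $\dim X \le 3r$ the unstable obstruction group maps isomorphically (or at least injectively) to the stable group $\{X^+, i_+X \smsh_B i_-X\}_{R(B)}$ — this is itself a Freudenthal-type statement for mapping spaces, needing a connectivity estimate of roughly $2(r+1) - 1 \ge 3r$ type, i.e. exactly the metastable condition — and that under this identification the James–Hopf obstruction corresponds to $\tilde\Delta$ and not to some other multiple or twist of it. Getting the basepoint/section bookkeeping right in the passage between $T(B\times S^0 \to B)$ and $R(B)$, and making sure the two sections $i_-$, $i_+$ are handled symmetrically, is the place where sign errors and off-by-one connectivity mistakes are most likely to hide.
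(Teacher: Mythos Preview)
Your reduction step has a gap. The object $X^+ = X\cup_{B\times S^0} B \in R(B)$ has \emph{one} section, not two: the two maps $B\times\{\pm1\}\to X$ are identified under the collapse $X\to X^+$, so the charge data is genuinely lost in passing to $X^+$. Consequently the assertion that ``$X$ can be reconstructed from $X^+$ once we know $X^+\simeq \Sigma_B W$'' is unjustified, and this is exactly the subtlety the introduction warns about. Any $W\in R(B)$ already carries a section $B\to W$, but there is no reason for that section to split off as $W\simeq Y\amalg B = Y^+$ for some $Y\in T(\emptyset\to B)$; without this you cannot pass back from $\Sigma_B W$ to an unreduced $S_B Y$ compatible with the given structure map $B\times S^0\to X$. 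The James--Hopf/EHP machinery you propose lives naturally in $R(B)$ and does not see the two-pointed structure, so even if it produces a based desuspension of $X^+$ you are still stuck at the step you described as routine.

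The paper avoids this by never leaving $T(B\times S^0\to B)$. It introduces the notion of a \emph{charged co-$H$ structure} on $X$: a lift of the diagonal $\Delta\colon X\to i_+X\times_B i_-X$ through the inclusion $i_+X\vee_B i_-X\hookrightarrow i_+X\times_B i_-X$, taken inside $T(B\times S^0\to B)$. A short Blakers--Massey argument (Lemma~\ref{lem:diagonal=coH}) shows that in the range $\dim X\le 3r$ such a lift exists iff $[\tilde\Delta]=0$; this cleanly replaces your step~(2). The substantive step is Proposition~\ref{prop:reformulation}, which identifies $S_BO_BX$ with the homotopy pullback of $i_+X\vee_B i_-X \hookrightarrow i_+X\times_B i_-X \leftarrow X$, so that a charged co-$H$ structure is exactly a homotopy section of the counit $c_X\colon S_BO_BX\to X$. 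From such a section $s$ one forms $Z$ as the homotopy pullback of $O_Bs$ against the unit $u_{O_BX}$, and a cube argument with the dual higher Blakers--Massey theorem shows $S_BZ\to X$ is $3r$-connected; a final appeal to \cite[th.~4.2]{Klein_haef} improves $Z$ to the desired $Y$. The point is that the adjoint pair $(S_B,O_B)$ between $T(\emptyset\to B)$ and $T(B\times S^0\to B)$ carries the charge bookkeeping automatically, and this adjunction---rather than a reduction to $R(B)$---is the missing ingredient in your outline.
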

 
  \begin{rem} According to Lemma \ref{lem:pm} below, there
   is a preferred weak equivalence $\Sigma_B i_-X \simeq \Sigma_B i_+X$. Consequently, there is an isomorphism of abelian groups
  \[
  \{X^+,i_+X \smsh_B i_-X\}_{R(B)}  \,\, \cong\,\, \{X^+,i_+X \smsh_B i_+X\}_{R(B)}\, .
 \]
  \end{rem}
  
\subsection*{The relative case} Suppose that $A \to B$ is a map and 
$X \in T(S_B A \to B)$ is an object. The relative case of the fiberwise desuspension problem
asks the extent to which $X$ lies in the image of the functor
\[
S_B \: T(A\to B) \to T(S_BA \to B)
\]
up to weak equivalence (we recover the absolute case when $A = \emptyset$). In
this more general context, we redefine
\[
X^+ \,\, := \,\, X \cup_{S_B A} B \, .
\]
With respect to this notational convention, 
the diagonal obstruction in the relative case
can be regarded as lying in $[\tilde \Delta] \in \{X^+,i_+X \smsh_B i_-X\}_{R(B)}$.
In the current context, we say that $X$ is $r$-connected  if $X\to B$ is $(r+1)$-connected, and we write 
$\dim X \le k$ if $X$ is obtained up to weak equivalence from $S_B A$ by attaching cells of dimension at most
$k$. In the following, we will need to assume that the structure map $S_B A \to X$ is
a cofibration and the structure map $X\to B$ is a fibration.

\begin{bigadd} \label{bigadd:main} With respect to these conventions,
Theorem \ref{bigthm:main} holds in the relative case.
\end{bigadd}

 \subsection*{Embedding up to homotopy type}
 Let $N$ be a compact smooth manifold of dimension $n$. Suppose
 that $f\: K \to N$ is a map, in which $K$ is a finite CW complex of dimension $\le k$. 
 An {\it $h$-embedding} of $f$ 
 consists of a pair $(U,h)$ in which
  $U\subset \text{int} N$ is a compact codimension zero submanifold  and 
   $h\: K @>>> U$  is a homotopy equivalence such that the composite
 \[
 K @> h >> U \subset N
 \]
 is homotopic to $f$. Heuristically, we think of $U$ as a ``compact regular neighborhood'' of $K$ in $N$,  in the sense that $U \subset N$ is a codimension zero compact manifold model for $K$ up to homotopy.

 There is also a notion of concordance of 
 $h$-embeddings of $f$. 
Let  $(U_0,h_0)$ and $(U_1,h_1)$ be $h$-embeddings of $f\: K \to N$. 
A {\it concordance} between them consists of a
compact submanifold $W\subset N \times [0,1]$ and a weak equivalence
 $H\: K\times [0,1] \to W$ such that
 \begin{itemize}
 \item There is a decomposition
 \[
 \partial W = U_0 \cup \partial_1 W \cup U_1\, ;
 \]
 where $U_0,\partial_1 W$ and $U_1$ are compact codimension
 zero submanifolds of $\partial W$ such that $\partial_1 W$ is
 an $h$-cobordism between $\partial U_0$ and $\partial U_1$.
 \item  $W$ meets $N\times \{i\}$ transversly 
with intersection $U_i$ for $i = 0,1$;
 \item $H$ extends $h_0 \amalg h_1$;
 \item the composite $K \times [0,1] @> H >> W \to N\times [0,1]$ is homotopic
 to $f \times \text{id}$.
 \end{itemize}
The operation of concatenation shows that concordance defines an equivalence relation on 
$h$-embeddings of $f$.

The above notion of $h$-embedding often appears in the literature
as ``embedding up to homotopy,'' or as ``embedded thickening.'' 
The topic was first systematically
studied by Wall \cite{Wall_classification4}, Stallings  (in the PL case)
\cite{Stallings} and Mazur
\cite{Mazur}. The work of Wall and Stallings showed that $h$-embeddings exist
whenever $f$ is $(2k-n+1)$-connected and $k \le n-3$.

 Habegger \cite{Habegger} extended this result to one more dimension (i.e.,
$f$ is $(2k-n)$-connected) by exhibiting a necessary and sufficient obstruction
lying in a quotient of a singular cohomology group (cf.\ Remark \ref{rem:Habegger} below) .
 The  case when $N$ is a sphere was studied by Connolly and Williams \cite{Connolly-Williams} where it is shown that the operation which sends an 
$h$-embedding $(U,h)$ of $K$ in $S^n$ to its complement data $S^n \setminus U$
induces, in a wide range, a bijection between the set of concordance classes of $h$-embeddings
of $K$ in $S^n$ and the
set of homotopy types of the Spanier-Whitehead $(n-1)$-duals of $K$. More recently, the second author's Ph.~D. thesis has extended the Connolly-Williams 
program to arbitary compact manifolds $N$ \cite{JP-thesis}.

When $K$ itself is a closed manifold of dimension $\le n-3$ and $n \ge 6$,
the foundational results of surgery theory imply
that $f$ is homotopic to a smooth embedding provided that an $h$-embedding of $f$ exists
and the stable
normal bundle of $f$, i.e., $f^\ast\nu_N - \nu_K$, destabilizes in a suitable way
to a vector bundle of rank $n-k$ (cf.\ \cite[chap.~11]{Wall_book}). In the metastable range $3(k+1) \le 2n$,
there is no obstruction to finding the bundle destabilization, so $f$ is homotopic to an embedding if and only
if an $h$-embedding of $f$ exists.

 Given an $h$-embedding of $f$, one can associate an $h$-embedding
 of the composite 
 \[
 f_1\: K @> f >> N \times 0 \subset N \times D^1
 \]
as follows: we let $J = [-1/2,1/2] \subset D^1$. Then we have 
$U \times J \subset \text{int}(N \times [-1,1])$
and the map 
\[
h_1\: K \times 0 @> h >>  U \times 0 \subset U \times J
\] 
is a homotopy equivalence. Hence,
\[(U \times J,h_1)\] is an $h$-embedding of $f_1$. This is called the 
{\it decompression} of $(U,h)$. Conversely, we say $(U,h)$ is the {\it compression}
of $(U \times J,h_1)$.

 Let $C$ be the closure of the complement of $U$ in $N$ and let
$W$ denote the closure of the complement of $U \times J$ in $N \times D^1$. Then $W \in
T(N \times S^0 \to N)$ is an object and there is a weak equivalence
\[
W \simeq S_B C
\]
(see e.g., \cite{Klein_haef}). Consequently, a necessary condition for an $h$-embedding
of   $f_1\: K \to N \times D^1$
to decompress (up to concordance) is that the complement data must
be weak equivalent to an unreduced fiberwise suspension of some object 
$C\in T(\emptyset \to N)$. 
 
In what follows we set
\[
(K\times K)^+ := (K \times K) \amalg (N \times N) \in R(N\times N)
\]
If $\Delta\: N \to N\times N$ is the diagonal map and $Y \in R(N)$ is an object, we
 set 
  \[
 \Delta_*(Y) := Y \cup_\Delta (N \times N) \in R(N\times N)
 \]
 Lastly, $\Sigma_N^{\tau_N-\epsilon} Y$ denotes the fiberwise suspension of $Y$ with
 respect to the virtual vector bundle $\tau-\epsilon$, where $\tau$ is the tangent bundle of $N$ and $\epsilon$ is the trivial bundle of rank one (cf.\ \S \ref{sec:prelim}).

\begin{bigthm}[Compression] \label{bigthm:compression} 
Let $(U,h)$ be an $h$-embedding of $f_1\: K \to N \times D^1$. Then there is an obstruction 
\[
\theta(U,h) \in \{(K\times K)^+,\Delta_\ast(\Sigma_N^{\tau_N-\epsilon} i_+S_N K)\}_{R(N\times N)}
\]
which vanishes whenever $(U,h)$ compresses into $N$. Conversly, if  we assume in addition  that
\[
r \ge \max(3k-2n+3,\frac{2k-n+3}{2})\, ,
\]
$k\le n-3$ and $n \ge 6$, then
the vanishing of $\theta(U,h)$ is sufficient to compressing $(U,h)$ up to concordance.
\end{bigthm}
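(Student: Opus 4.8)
The plan is to convert the compression question into the fiberwise desuspension problem for the complement of $U$, apply Theorem~\ref{bigthm:main} (in the relative form of Addendum~\ref{bigadd:main} when $\partial N\ne\emptyset$), and then transport the reduced-diagonal obstruction through a fiberwise Spanier--Whitehead duality over $N$; the invariant $\theta(U,h)$ will be, by construction, the image of $[\tilde\Delta]$ under this transport.

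\emph{Step 1 (geometric reduction).} Given an $h$-embedding $(U,h)$ of $f_1\colon K\to N\times D^1$, let $W$ be the closure of the complement of $U$ in $N\times D^1$. Projection to $N$ and the inclusion of $\partial(N\times D^1)=S_N \partial N$ (which is $N\times S^0$ when $N$ is closed) make $W$ an object of $T(S_N \partial N\to N)$. As recalled in the introduction \cite{Klein_haef}, if $U$ is concordant to a decompression $U'\times J$ then $W\simeq S_N C$ with $C=\overline{N\setminus U'}$. Conversely, in the range $k\le n-3$, $n\ge 6$, handle-theoretic and engulfing techniques in the spirit of Wall \cite{Wall_classification4}, Stallings \cite{Stallings} and the second author's thesis \cite{JP-thesis} let one promote a weak equivalence $W\simeq S_N C$ to an honest compression up to concordance: one realizes $C$ as a codimension-zero submanifold of $N$, checks that its complement is a regular neighborhood of $K$, decompresses, and trades the resulting discrepancy with $(U,h)$ across an $h$-cobordism. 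Hence $(U,h)$ compresses into $N$ up to concordance if and only if $W\simeq S_N C$ for some $C\in T(\emptyset\to N)$.

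\emph{Step 2 (applying the suspension theorem).} I would then check that $W$ satisfies the hypotheses of Theorem~\ref{bigthm:main}. General position for the complement of the $k$-complex $K$ in the $(n{+}1)$-manifold $N\times D^1$ bounds the connectivity of $W\to N$ from below, and Poincar\'e--Lefschetz duality identifies $H_*(W,S_N \partial N)$ with $\tilde H^{\,n+1-*}$ of the mapping cone of $f_1$, which is concentrated in an appropriate band because an $h$-embedding of $f_1$ exists (so $f$ is at least $(2k-n)$-connected). The numerical hypotheses $r\ge\max(3k-2n+3,\tfrac{2k-n+3}{2})$ and $k\le n-3$ are arranged precisely so that $W$ is $r$-connected with $\dim W\le 3r$. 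Theorem~\ref{bigthm:main} then applies; combined with the elementary observation recorded just before it (any fiberwise suspension carries a nullhomotopic reduced diagonal), it gives the dichotomy: $W\simeq S_N C$ for some $C$ if and only if $[\tilde\Delta]=0$ in $\{W^+,i_+W\smsh_N i_-W\}_{R(N)}$.

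\emph{Step 3 (duality and the definition of $\theta$).} This is the heart of the argument. Since $U$ is a regular neighborhood of $K$ in $N\times D^1$ and $W$ is its complement, a fiberwise Atiyah/Poincar\'e--Lefschetz duality over $N$ for manifolds with boundary should identify the $N$-dual $D_N(W^+)$ with $\Sigma_N^{\tau_N-\epsilon}i_+S_N K$ --- the twist by the tangent bundle $\tau_N$ being the usual Atiyah shift, the $-\epsilon$ recording the extra $D^1$-coordinate, and the two retractive structures $i_\pm W$ matching, via Lemma~\ref{lem:pm}, the two structures on $S_N K$. Because fiberwise duality is symmetric monoidal, the reduced diagonal $\tilde\Delta\colon W^+\to i_+W\smsh_N i_-W$ --- which factors as the pullback along $\Delta\colon N\to N\times N$ of a map into the external smash $i_+W\esmsh i_-W$ over $N\times N$ --- dualizes to a map of retractive spaces over $N\times N$ whose source becomes $(K\times K)^+$ and whose target becomes $\Delta_\ast(\Sigma_N^{\tau_N-\epsilon}i_+S_N K)$, the extra Thom twist hidden in $\Delta_\ast$ being forced by the normal bundle $\nu_\Delta=\tau_N$ of the diagonal (again shifted by $-\epsilon$). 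One then \emph{defines} $\theta(U,h)$ to be the resulting class in $\{(K\times K)^+,\Delta_\ast(\Sigma_N^{\tau_N-\epsilon}i_+S_N K)\}_{R(N\times N)}$. By naturality, $\theta(U,h)$ vanishes whenever $[\tilde\Delta]$ does, hence whenever $(U,h)$ compresses --- requiring no range restriction. For the converse, under the stated hypotheses the duality is an isomorphism, not merely a surjection, on the stable homotopy groups in play, so $\theta(U,h)=0$ forces $[\tilde\Delta]=0$, hence $W\simeq S_N C$ by Step~2, hence $(U,h)$ compresses up to concordance by Step~1. The obstacle is exactly this Step~3: pinning the duality down tightly enough to produce the precise twist $\tau_N-\epsilon$, to keep the internal smash over $N$, the external smash over $N\times N$, and the diagonal pullback straight, to match $i_+$ with $i_-$, and above all to establish the injectivity of the duality on the relevant stable homotopy groups --- which is the true source of the metastable inequalities on $r$. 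Everything else reduces to Theorem~\ref{bigthm:main} or to standard codimension-$\ge 3$ handle theory.
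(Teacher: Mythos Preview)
Your three--step strategy --- reduce compression to fiberwise desuspension of the complement $W$, apply Theorem~\ref{bigthm:main} to $W$, and transport $[\tilde\Delta]$ through duality --- is exactly the paper's approach. The paper carries out Step~3 by first using the adjunction
\[
\{W^+, i_+W\smsh_N i_-W\}_{R(N)} \,\cong\, \{\Delta_*(W^+), i_+W\esmsh i_-W\}_{R(N\times N)}
\]
and then applying the \emph{umkehr correspondence} coming from the specific $(N\times N)$-dualities established in \S\ref{sec:duality}: $\Delta_*(W^+)$ is dual to $\Delta_*(\Sigma_N^{\tau_N}i_+S_NK)$ with indexing parameter~$1$, while $i_+W\esmsh i_-W$ is dual to $(K\times K)^+$ with indexing parameter~$0$; the $-\epsilon$ twist records precisely this mismatch of indexing parameters.

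There is, however, one conceptual error in your account of where the numerical hypotheses enter. The umkehr correspondence is an \emph{unconditional} isomorphism of abelian groups: stable fiberwise duality needs no metastable hypothesis, so $\theta(U,h)=0$ is equivalent to $[\tilde\Delta]=0$ in every case, not merely in a range. The two inequalities on $r$ come from the other two steps. In Step~2 one finds (via Poincar\'e duality, as in \cite[p.~615]{Klein_haef}) that $W$ is $(n-k-1)$-connected with $\dim W\le n-r$; the hypothesis of Theorem~\ref{bigthm:main} then reads $n-r\le 3(n-k-1)$, i.e.\ $r\ge 3k-2n+3$. The second inequality $2r\ge 2k-n+3$, together with $k\le n-3$ and $n\ge 6$, belongs to Step~1: it is the hypothesis of the result from \cite{JP-thesis} that promotes a weak equivalence $W\simeq S_NC$ to a genuine compression up to concordance. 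Your sentence ``$W$ is $r$-connected with $\dim W\le 3r$'' conflates the connectivity $r$ of $f$ with the connectivity $n-k-1$ of $W$; these are different numbers, and keeping them straight is what produces the correct inequality.
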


\begin{rem} In the special case $N = D^n$, it is not hard to deduce Theorem \ref{bigthm:compression} from the main result of \cite{Klein_sphere}.
\end{rem}

\begin{out} \S\ref{sec:prelim} is mostly about language. In \S\ref{sec:coH}
which introduce the concept of a charged co-$H$ space. This is subsequently used to 
give a proof of Theorem \ref{bigthm:main}. In \S\ref{sec:duality} we recall the notion
of fiberwise duality introduced in \cite{Klein_dualizing2}. \S\ref{sec:compress} contains
the proof of Theorem \ref{bigthm:compression}. 

\end{out}

\begin{ack} The first author wishes to thank the University of Copenhagen
for its hospitality when the research for this paper was being done.
\end{ack}

 \section{Preliminaries \label{sec:prelim}}
 
Henceforth, we let $T$ denote the category of compactly generated weak Hausdorff spaces which we
 consider as enriched over simplicial sets. Products are retopologized with respect to the compactly generated topology.  We assume that the reader is familiar with the basic ideas of model categories. We give $T$ the simplicial  model structure in which a weak equivalence is a weak homotopy equivalence, a fibration is a Serre fibration and a cofibration satisfies the left lifting property with respect to the acyclic fibrations.

As in the introduction,
if $f\: A\to B$ is a map, we let $T(A @>f >> B)$ be the category of factorizations. When $f$ is understood, we abbreviate the notation to $T(A\to B)$.
Define fibrations, cofibrations and weak equivalences in $T(A\to B)$ by applying the forgetful functor $T(A\to B) \to T$. Since 
\[
T(A\to B) \cong f\backslash(T/B)\, ,
\]
it follows from \cite[II.2.8,~prop.~6]{Quillen} that $T(A\to B)$ is 
a simplicial model category with respect to these choices.  For objects $U,V\in T(A\to B)$
we therefore have the homotopy set
\[
[U,V]_{T(A\to B)}
\]
which is given by the homotopy classes of morphisms $U^c \to V^f$ in which $U^c$ is a cofibrant
approximation of $U$ and $V^f$ is a fibrant approximation of $V$.

\subsection*{Finiteness} Let $Y \in T(A\to B)$ be an object.  Given a commutative diagram
 \[
 \xymatrix{
 S^{j-1} \ar[r]^{\alpha}\ar[d] & Y \ar[d] \\
 D^j \ar[r] & B
 }
 \]
 in which the structure map appears on the right, we can form the object
 \[
 Y \cup_{\alpha} D^j \in T(A\to B)\, .
 \] 
 This is called {\it attaching a cell} to $Y$ along $\alpha$.

An object $X \in T(A\to B)$ is said to be {\it finite} if it is, up to isomorphism,
obtained from $A$ by the iterated attachment of a finite number of cells.
More generally, $X$ is {\it homotopy finite}
if it is homotopy equivalent to a finite object. Lastly, $X$ is {\it finitely dominated} if it is a retract of
a homotopy finite object.

\subsection*{Adjunction rules}
For $f\: A\to B$ as above, consider the faithful embedding
\[
i\: R(A) \to T(A \to B)
\]
given by the identity. This has a right adjoint $j\: T(A\to B) \to R(A)$ given by
$Y \mapsto  A\times_B Y$ in which the latter denotes the fiber product.

Let 
\[
f_\sharp\:  R(B) \to T(A \to B)
\]
be defined by $X\mapsto X_\sharp$, in which $X_\sharp$ denotes
$X$ with 
 structure map $A \to X$ given by the composite $A \to B \to X$.
 Then $f_{\sharp}$ has a left adjoint $f^+ \: T(A \to B) \to R(B)$ given by
 $X \mapsto X^+$, where 
  \[
 X^+ = X \cup_{A} B\, .
 \]
There is a third adjunction to consider. Let
 \[
 f^!\: R(B) \to R(A)
 \]
  given by 
 mapping $Y \in R(B)$ to the fiber product $Y^! := Y \times_B A$. This is right adjoint to the functor
 $f_!\: R(A) \to R(B)$ given by 
 $X \mapsto X_!$, where $X_! = X \cup_A B$. 
 Then $f_! = f^+\circ i$ and $f^! = j\circ f_\sharp$.
 The above may be summarized in the diagram 
 \[
 \xymatrix{ 
 R(A) \ar@/^3.0pc/[rrrr]^{f_!}\ar@<1ex>[rr]^i && T(A\to B) \ar[ll]^j \ar@<1ex>[rr]^{f^+} && R(B)\, . \ar[ll]^{f_\sharp} \ar@/^3.0pc/[llll]^{f^!}
 }
 \]
 in which the right pointing arrows are the right adjoints to the corresponding left pointing ones.

 \begin{cor}\label{lem:adjunction_prop} (1). For a cofibrant object $X\in T(A\to B)$ and a fibrant object $Y \in R(B)$ there is 
 a natural isomorphism of pointed sets
 \[
 [X,Y_{\sharp}]_{T(A \to B)} \cong  [X^+,Y]_{R(B)} \, .
 \]
 \noindent (2). For a cofibrant object $Z \in R(A)$, there is an isomorphism of pointed sets
 \[
 [Z,Y^!]_{R(A)} \cong [Z_!,Y]_{R(B)}\, .
 \]
 \end{cor}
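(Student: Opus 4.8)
The plan is to deduce Corollary \ref{lem:adjunction_prop} directly from the adjunctions already assembled in the diagram preceding it, together with the fact that each of the functor pairs $(f^+, f_\sharp)$ and $(f_!, f^!)$ is a Quillen adjunction between simplicial model categories; the homotopy-set statements then follow by passing to homotopy categories and using that $[-,-]$ in a simplicial model category is computed by replacing the source cofibrantly and the target fibrantly. So first I would verify that $f_\sharp\colon R(B)\to T(A\to B)$ preserves fibrations and acyclic fibrations: both notions are detected by the forgetful functor to $T$, and $f_\sharp$ does not change the underlying space, so this is immediate; dually its left adjoint $f^+$ preserves cofibrations and acyclic cofibrations. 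The same observation, applied to $i\colon R(A)\to T(A\to B)$ and $j$, and then to the composites $f_! = f^+\circ i$ and $f^! = j\circ f_\sharp$, shows $(f_!,f^!)$ is a Quillen pair as well.

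Next, for part (1), given $X\in T(A\to B)$ cofibrant and $Y\in R(B)$ fibrant, the ordinary adjunction isomorphism of pointed sets
\[
\hom_{T(A\to B)}(X, Y_\sharp)\;\cong\;\hom_{R(B)}(X^+, Y)
\]
is already available from the adjunction $(f^+\dashv f_\sharp)$. Because $f_\sharp$ preserves fibrant objects, $Y_\sharp$ is fibrant in $T(A\to B)$; because $f^+$ preserves cofibrant objects, $X^+$ is cofibrant in $R(B)$. Hence on both sides the hom-set of the model category computes the morphism set in the homotopy category with no further replacement needed, i.e.\ $[X,Y_\sharp]_{T(A\to B)}$ and $[X^+,Y]_{R(B)}$ are exactly these hom-sets modulo simplicial homotopy; naturality of the adjunction, together with the observation that a simplicial homotopy $X\wedge \Delta[1]_+ \to Y_\sharp$ transposes to one $X^+\wedge\Delta[1]_+\to Y$ (the relevant simplicial tensorings agree under $f^+$), shows the bijection descends. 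Part (2) is identical, using $(f_!\dashv f^!)$: for $Z\in R(A)$ cofibrant and $Y\in R(B)$ fibrant, $Y^!=f^!Y$ is fibrant in $R(A)$ and $Z_! = f_!Z$ is cofibrant in $R(B)$, and the set-level adjunction $\hom_{R(A)}(Z,Y^!)\cong\hom_{R(B)}(Z_!,Y)$ passes to homotopy classes for the same reason.

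The only genuinely delicate point — and the step I would spell out most carefully — is the compatibility of the adjunction with the simplicial structure: one needs that $f^+$ (resp.\ $f_!$) commutes with the tensoring by finite simplicial sets used to define simplicial homotopy, so that homotopic maps correspond to homotopic maps under transposition. This holds because $f^+$ is defined by a pushout along $A\to B$ and tensoring with a pointed simplicial set is also a colimit construction, so the two commute up to canonical isomorphism; once that is recorded, everything else is formal. I would therefore organize the write-up as: (i) note the Quillen pairs; (ii) note that $f_\sharp, f^!$ preserve fibrant objects and $f^+, f_!$ preserve cofibrant objects; (iii) invoke the strict adjunction isomorphism and check it is simplicial; (iv) conclude. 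No estimates or range hypotheses enter here; this corollary is purely a bookkeeping consequence of the adjunction diagram, which is exactly why it is stated as a corollary.
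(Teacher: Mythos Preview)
Your proposal is correct and is essentially the argument the paper has in mind: the paper gives no proof at all, simply stating the result as an immediate corollary of the adjunction diagram just above it, and your write-up makes explicit exactly the standard model-category bookkeeping (Quillen pairs, preservation of fibrant/cofibrant objects, simplicial compatibility) that justifies passing the set-level adjunctions to homotopy classes.
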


 \subsection*{Fiberwise smash products}
If  $X,Y \in R(B)$ are objects, then the {\it internal smash product}
\[
X\smsh_B Y \in R(B)
\]
is given by 
\[
\colim (B @<<< X \cup_B Y @>\subset >> X \times_B Y )
 \]
If $X \in R(B)$ and $Z \in R(B')$, then the {\it external smash product}
\[
X \esmsh Z \in R(B\times B')
 \]
 is given by 
\[
\colim (B\times B' @<<< B \times Z \cup_{B\times B'} X \times B' @>>> X \times Z )
\]
Note that when $B = B'$, we have 
\[
X\smsh_B Y \cong \Delta^!(X\esmsh Y)\, ,
\]
where $\Delta\: B \to B\times B$
is the diagonal map. 
Also note that for $K \in R(B), L\in R(B')$, we have
\[
(K \times L)^+ \cong  K^+ \esmsh L^+
\]
as objects of $R(B\times B')$, where $(K\times L)^+ = (K\times L)\amalg (B\times B')$,
$K^+ = K \amalg B$, and $L^+ = L \amalg B'$.

\subsection*{Fiberwise suspension}
For any map $A\to B$ one can think of
 unreduced fiberwise suspension as a functor
\[
S_B\: T(A\to B) \to T(S_B A \to B)\, .
\] 
 Similarly, the {\it reduced} fiberwise suspension functor $\Sigma_B \: R(B) \to R(B)$ is defined
as
 \[
 \Sigma_B Y := \colim (B @<<<   S_B B @>>>  S_B Y)\, ,
 \]
where on the right we are considering $Y$ as an object of $T(\emptyset \to B)$ by
means of the forgetful functor.
As in the introduction, let $i_-,i_+\: T(B \times S^0\to B) \to R(B)$
be the functors restricting the structure map
using the inclusions $B\times \{\pm 1\} \subset B \times S^0$.

 \begin{lem} \label{lem:pm} Let $X \in T(B\times S^0 \to B)$ be an object.
Then there is a weak equivalence
\[
\Sigma_B i_- X \,\, \simeq \,\, \Sigma_B i_+ X\, .
\]
\end{lem}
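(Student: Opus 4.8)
The plan is to realize both $\Sigma_B i_- X$ and $\Sigma_B i_+ X$ as fiberwise homotopy cofibers of the \emph{suspended sections}, through one and the same auxiliary object that does not remember either section, and then to invoke the (fiberwise) basepoint-independence of the reduced suspension.

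First, bookkeeping. Restricting $B\times S^0 \to X$ along $B\times\{\pm 1\}$ gives two sections $\sigma_\pm\colon B\to X$; after replacing $X$ if necessary we may take $B\times S^0 \to X$ to be a cofibration, so each $\sigma_\pm$ is a cofibration. Let $X\amalg B \in R(B)$ denote the object obtained from $X$ (viewed in $T(\emptyset \to B)$) by adjoining a disjoint section; this depends only on the map $X\to B$. For either sign there is a cofiber sequence in $R(B)$
\[
(B\amalg B)_B \xrightarrow{\ \iota_\pm\ } X\amalg B \longrightarrow i_\pm X,
\]
in which $(B\amalg B)_B$ is the trivial retractive $S^0$-bundle, $\iota_\pm$ carries its section to the adjoined section of $X\amalg B$ and its free copy of $B$ onto $\sigma_\pm(B)\subseteq X$, and one checks directly from the definitions that the cofiber is $i_\pm X$ (fiberwise, this is the identification $(X_b)_+/\{\ast,\sigma_\pm(b)\} = (X_b,\sigma_\pm(b))$). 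Applying the fiberwise reduced suspension $\Sigma_B$, which preserves homotopy cofiber sequences, yields for each sign
\[
S^1_B \xrightarrow{\ \Sigma_B\iota_\pm\ } \Sigma_B(X\amalg B) \longrightarrow \Sigma_B i_\pm X,\qquad S^1_B := \Sigma_B(B\amalg B)_B,
\]
so that $\Sigma_B i_\pm X \simeq \operatorname{hocofib}_B(\Sigma_B\iota_\pm)$, where now \emph{both} cofibers are of maps $S^1_B \to \Sigma_B(X\amalg B)$ with the same source and target.

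It therefore suffices to produce a fiberwise self-homotopy-equivalence $\phi$ of $\Sigma_B(X\amalg B)$ (possibly together with a self-equivalence of $S^1_B$) with $\phi\circ\Sigma_B\iota_- \simeq \Sigma_B\iota_+$; then the two hocofibers agree and $\Sigma_B i_- X \simeq \Sigma_B i_+ X$, naturally in $X$ (this is the ``preferred'' equivalence used later). Fiberwise, the maps $\Sigma_B\iota_\pm$ are the suspensions of the based maps $S^0 \to (X_b)_+$ hitting $\sigma_\pm(b)$, so the needed statement is precisely the fiberwise version of the classical — but slightly non-obvious, since the sections may hit different path components of the fibres — fact that $\Sigma Z$ is independent of the basepoint of $Z$ up to homotopy equivalence. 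I would construct $\phi$ fibrewise from the fiberwise co-$H$ comultiplication on $\Sigma_B(X\amalg B)$ together with the two \emph{global} sections $\sigma_\pm$ — morally a Nielsen-type move exchanging two ``component generators'' — and check that it is compatible over $B$.

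The main obstacle is exactly this last step: when $\sigma_-(b)$ and $\sigma_+(b)$ lie in different path components of the fibre (already the case for $X = B\times S^0$), the two suspended sections are genuinely not homotopic, so the self-equivalence $\phi$ must be manufactured from the co-$H$ structure rather than by sliding sections; getting this to work coherently over all of $B$, and canonically, is the delicate part. (If $X\to B$ has connected fibres the argument is soft: each fibre of the fiberwise \emph{unreduced} suspension $S_B X$ is then simply connected, so the meridian arcs through $\sigma_\pm(b)$ are homotopic rel the two cone points, the corresponding meridian subbundles of $S_B X$ are fibrewise homotopic, and the quotients — which are $\Sigma_B i_\pm X$ — are directly weakly equivalent; this serves as a check, but the disconnected case really does require the cofiber-sequence argument above.)
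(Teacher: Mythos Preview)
Your proposal sets up a reasonable framework but does not complete the proof: you explicitly identify the construction of the self-equivalence $\phi$ of $\Sigma_B(X\amalg B)$ as ``the delicate part'' and leave it unexecuted. Since the entire argument hinges on $\phi$ (the cofiber sequences alone do not relate $\Sigma_B i_-X$ and $\Sigma_B i_+X$), this is a genuine gap, not a routine omission.

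The paper's proof bypasses this difficulty with a direct zigzag. Rather than compare the two attaching maps through a self-equivalence, it exhibits a single object of $R(B)$ admitting weak equivalences to both $\Sigma_B i_- X$ and $\Sigma_B i_+ X$. That object is simply the unreduced fiberwise suspension $S_B X$ (forgetting both sections). Since $X \in T(B\times S^0 \to B)$, one has $S_B X \in T(B\times S^1 \to B)$; restricting along the north pole of $S^1$ makes $S_B X$ an object of $R(B)$. The evident quotient maps $S_B X \to \Sigma_B i_\pm X$ collapse the fiberwise meridian through $\sigma_\pm$, which is fiberwise contractible, so each is a weak equivalence in $R(B)$. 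This works uniformly, including when the fibers of $X\to B$ are disconnected---exactly the case you flag as problematic.

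It is worth noting that your intermediate object $\Sigma_B(X\amalg B)$ is already weakly equivalent to the paper's $S_B X$ with its pole section: reducing the suspension of $X\amalg B$ collapses the extra meridian contributed by the disjoint $B$, which joins the two cone points of $S_B X$. So you were one observation away from the short argument; the detour through a co-$H$/Nielsen-type self-equivalence $\phi$ introduces precisely the basepoint-coherence problem that the direct zigzag sidesteps.
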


\begin{proof} The object $S_B X \in T(B\times S^1 \to B)$ 
can be considered as an object of $R(B)$ by choosing the north pole of $S^1$. With respect
to this choice, the evident
quotient maps $S_B X \to \Sigma_B i_- X$ and $S_B X \to \Sigma_B i_+ X$ are weak equivalences
of $R(B)$.
\end{proof} 
 
 \subsection*{Bundle suspensions}
For a vector bundle $\xi$ with base space $B$, let $S^\xi_B\in R(B)$ be the fiberwise
one-point compactification of $\xi$. For an object $Y \in R(B)$, set
\[
\Sigma^\xi_B Y = S^\xi_B \smsh_B Y \in R(B)\, . 
\]
If $\xi = \epsilon$ is a trivial bundle of rank one, then we recover the reduced fiberwise suspension.

\begin{rem} Let $c\: B \to \ast$ be the constant map to a point. Then
\[
c_!S^\xi_B := B^\xi \in R(\ast)
\]
is the Thom space of $\xi$.
\end{rem}

If $\xi$ and $\eta$ are a pair of vector bundles over $B$, and $X,Y\in R(B)$ are (cofibrant) objects, then we define
\[
[X,\Sigma^{\xi-\eta}_B Y]_{R(B)} := [ \Sigma^\eta_B X,\Sigma^{\xi}_B Y]_{R(B)}\, ,
\]
and similarly,
\[
\{X,\Sigma^{\xi-\eta}_B Y\}_{R(B)} := \{ \Sigma^\eta_B X,\Sigma^{\xi}_B Y\}_{R(B)}\, .
\]

\begin{rem} Alternatively, there is a fiberwise spectrum $S^{\xi-\eta}_B$ over $B$ whose fiber at $b\in B$ is the function spectrum $F(S^\eta_b,S^\xi_b)$. One may then define 
 a fiberwise spectrum $\Sigma^{\xi-\eta}_B Y$ to be $S^{\xi-\eta}_B \smsh_B Y$. This is a model for the fiberwise suspension of $Y$ with respect to the virtual bundle $\xi - \eta$.
 \end{rem}

\subsection*{The Adjoint to $S_B$}  The functor $S_B\: T(\emptyset \to B)
\to T(B\times S^0 \to B)$ has a right adjoint 
\[
O_B\: T(B\times S^0 \to B) \to T(\emptyset\to B)\, .
\]
For $Y \in T(B\times S^0 \to B)$, one defines $O_B Y$ to be the space of paths $\gamma\: D^1 \to Y$ which project
to a constant path $D^1 \to Y \to B$ and which satisfy the condition
$\gamma(\pm 1) \in B\times \{\pm 1\}$. The structure map
$O_B Y \to B$ is given by mapping such a path in $Y$ to 
its associated constant path in $B$.

\subsection*{Equivariant versus fiberwise spaces}
Let $G$ be a topological group whose underlying space is cofibrant. Let $R^G(\ast)$ be the category of based left $G$-spaces. Then $R^G(\ast)$ is a  simplicial 
model category
in which a fibration and weak equivalence are defined by the forgetful functor $R^G(\ast) \to T$ and cofibrations are defined by the left lifting property with respect to the acyclic fibrations.  

Then there is a Quillen equivalence
\begin{equation}
\label{eqn:Quillen_equiv}
\xymatrix{
R^G(\ast)\ar@<1ex>[r]^f & R(BG)\ar[l]^g 
}
\end{equation}
in which $f(X) = B(\ast;G;X)$ is the two-sided bar construction and 
$g(Y) = \text{map}_{R(BG)}(EG,Y)$ is the space of maps $EG\to Y$ which cover the identity
map of $BG$ (here $f$ is the left adjoint to $g$). For details, see \cite[cor.~8.7]{Shulman}. In particular the homotopy
categories of $R^G(\ast)$ and $R(BG)$ are equivalent.

 \begin{rem}  As briefly mentioned in the introduction, when making various functorial constructions it is often crucial to ``derive'' them  by
applying fibrant and/or cofibrant replacements when needed 
 to ensure that the result is homotopy invariant (this  
 occurs especially often in \S\ref{sec:duality}). In order to 
 avoid notational clutter, {\it we assume that
 this has been done wherever necessary, but we will not usually indicate
 it in the notation}. We hope this does not lead to any
 confusion. The reader is forewarned.
\end{rem}

\section{Charged co-H structures \label{sec:coH}}

Let $X \in T(B \times S^0\to B)$ be an object. We can then form the space 
\[
i_+ X \vee_B i_- X \, ,
\]
which is the pushout of the diagram $X @< s_+ << B @> s_- >> X$, where
$s_\pm$ are the restrictions of the structure map $B\times S^0 \to X$ to each summand. 
We wish to consider $i_+ X \vee_B i_- X $ as an object of $T(B \times S^0 \to B)$.
This can be achieved by considering
the commutative diagram
of spaces over $B$
\[
\xymatrix{
B \ar[d]_{s_-} & \emptyset \ar[d]\ar[r] \ar[l] & B \ar[d]^{s_+} \\
X & B \ar[l]^{s_+} \ar[r]_{s_-} & X \, .
}
\]
The pushout of the top line is $B \times S^0$ whereas the pushout of the bottom line
is $i_+ X \vee_B i_- X$. So we have a charge structure $B\times S^0 \to i_+ X \vee_B i_- X$.

Let $i_+X \times_B i_-X$ be the fiber product of $i_+X$ with $i_-X$. There is an evident inclusion 
\[
i_+ X \vee_B i_- X @> \subset >>  i_+X \times_B i_-X\, .
\]
If we give $i_+X \times_B i_-X$ the induced charge structure, the inclusion
becomes a morphism
of $T(B \times S^0 \to B)$.  There is also a diagonal morphism
\[
\Delta\: X \to i_+X \times_B i_-X\, .
\]

\begin{defn} A {\it charged co-$H$ structure} on $X$ is a morphism
\[
p\: X  \to i_+X \vee_B i_-X
\]
of $T(B\times S^0 \to B)$ such that the composition
\[
X @> p >> i_+X \vee_B i_-X @> \subset >> i_+X \times_B i_-X
\]
coincides with $\Delta$ up to homotopy, i.e., in $[X,i_+X \times_B i_-X]_{T(B\times S^0\to B)}$.
\end{defn}

Recall that $i_+ X\smsh_B i_-X$ is the pushout of the diagram
\[
B @<<< i_+X \vee_B i_-X @> \subset >>  i_+X \times_B i_-X
\]
Then the charge structure  $B \times S^0 \to i_+ X\smsh_B i_-X$ factors through $B$.
In other words, we can legitimately consider $i_+ X\smsh_B i_-X$ to be an object of 
$R(B)$ without any loss of information.

Hence,  if $X \in T(B\times S^0 \to B)$ is an object, then the composition
\[
X@> \Delta >> i_+X \times_B i_-X @>>> i_+X \smsh_B i_-X 
\]
factors through $X^+ := X \cup_{B\times S^0} B$, and the resulting map
\[
\tilde \Delta\: X^+ @>>> i_+X \smsh_B i_-X 
\]
is a morphism of $R(B)$.

\begin{lem} \label{lem:diagonal=coH} Assume $X$ is fibrant and cofibrant. 
If $X$ can be equipped with a charged co-H structure, then 
\[
[\tilde \Delta]  \in \{X^+, i_+X \smsh_B i_-X\}_{R(B)}
\]
is trivial. Conversely, if $X$ is $r$-connected, $\dim X \le 3r$ and $\tilde \Delta$ is trivial, then $X$ can be equipped with a charged co-$H$ structure.
\end{lem}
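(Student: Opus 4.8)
The plan is to treat the two implications separately, with the forward direction being essentially formal and the converse being the real content, handled by an obstruction-theoretic argument in the metastable range.

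For the forward implication, suppose $X$ carries a charged co-$H$ structure $p\colon X \to i_+X \vee_B i_-X$. By definition the composite of $p$ with the inclusion $i_+X \vee_B i_-X \hookrightarrow i_+X \times_B i_-X$ agrees with the diagonal $\Delta$ up to homotopy in $T(B\times S^0\to B)$. Now post-compose with the quotient $i_+X \times_B i_-X \to i_+X \smsh_B i_-X$. On one hand this recovers the map inducing $\tilde\Delta$ (after passing to $X^+$), since that is exactly how $\tilde\Delta$ was defined. On the other hand, the composite $i_+X \vee_B i_-X \hookrightarrow i_+X\times_B i_-X \to i_+X\smsh_B i_-X$ is the trivial map — the wedge is collapsed in forming the smash. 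Hence $\tilde\Delta$ factors up to homotopy through $i_+X\vee_B i_-X \to B$, so $[\tilde\Delta]$ is the trivial class in $[X^+, i_+X\smsh_B i_-X]_{R(B)}$, and a fortiori trivial in the stable group $\{X^+, i_+X\smsh_B i_-X\}_{R(B)}$.

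For the converse, the standard approach is to build $p$ by obstruction theory against the fibration sequence over $B$ associated to the map $i_+X\vee_B i_-X \to i_+X\times_B i_-X$, whose fiber is (fiberwise weakly) the join $i_+X * _B i_-X$, or equivalently the suspension of $i_+X\smsh_B i_-X$ up to a shift. More precisely, one wants to lift the diagonal $\Delta\colon X \to i_+X\times_B i_-X$ through $i_+X\vee_B i_-X \to i_+X\times_B i_-X$. Since $\Delta$ already restricts to $s_\pm$ on $B\times S^0$, the lifting problem is relative to $B\times S^0$, so it is governed by the obstruction living in (cohomology of $X^+$ rel basepoint with coefficients in) the homotopy of the fiber. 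Because $X$ is $r$-connected and $\dim X \le 3r$, the fiber is roughly $(2r)$-connected (it involves the fiberwise smash of two $r$-connected objects, shifted up by one), so the classical connectivity–dimension count places the primary obstruction in the expected group and makes all higher obstructions vanish: the target cohomology groups are zero above degree $3r$ while the coefficient groups vanish below degree $2r+1$, leaving exactly one potentially nonzero obstruction. The key point is to identify that single obstruction with the stable class $[\tilde\Delta]$ (this is where one invokes that in the metastable range $\dim X\le 3r$ the unstable map $\tilde\Delta$ is detected stably, so vanishing of $[\tilde\Delta]\in\{-,-\}_{R(B)}$ already forces vanishing of the unstable obstruction — one uses the fiberwise Freudenthal theorem here, the suspension $[X^+, i_+X\smsh_B i_-X]_{R(B)}\to \{X^+, i_+X\smsh_B i_-X\}_{R(B)}$ being injective in this range). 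Once that identification is in place, triviality of $[\tilde\Delta]$ produces a nullhomotopy, hence a lift, hence the desired morphism $p$; one then checks $p$ is a morphism of $T(B\times S^0\to B)$ and that its composite to $i_+X\times_B i_-X$ is $\Delta$ up to homotopy by construction.

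The main obstacle I expect is precisely the identification of the obstruction class with $[\tilde\Delta]$, together with the careful bookkeeping needed to run the obstruction theory fiberwise over $B$ and relative to the already-prescribed behavior on $B\times S^0$. One has to be attentive that the fibration over $B$ need not admit a section, so the relevant obstruction theory is for the \emph{relative} lifting problem $X \leftarrow B\times S^0$, which is naturally encoded by passing to $X^+\in R(B)$ and computing in $[X^+,\cdot]_{R(B)}$; establishing the connectivity of the fiber of $i_+X\vee_B i_-X\to i_+X\times_B i_-X$ (a fiberwise Blakers–Massey type estimate) and verifying the metastable range hypothesis $\dim X\le 3r$ is exactly what makes both the injectivity of fiberwise suspension and the vanishing of higher obstructions hold simultaneously.
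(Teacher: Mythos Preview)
Your proposal is correct and follows essentially the same route as the paper's sketch: the forward direction is the formal observation that the composite through the wedge is null, and for the converse the paper likewise uses Blakers--Massey to turn the cofiber sequence $i_+X\vee_B i_-X \hookrightarrow i_+X\times_B i_-X \to i_+X\smsh_B i_-X$ into a fiber sequence through dimension $3r+1$, then the adjunction $[X,-]_{T(B\times S^0\to B)}\cong [X^+,-]_{R(B)}$, and finally a Freudenthal-type stabilization to pass from the unstable to the stable group. One small correction that does not affect your argument: the homotopy fiber of $i_+X\vee_B i_-X \hookrightarrow i_+X\times_B i_-X$ is not the fiberwise join $i_+X *_B i_-X$ (classically it is $\Omega X_+ * \Omega X_-$, and in the metastable range it agrees with $\Omega_B(i_+X\smsh_B i_-X)$); your connectivity estimate and the location of the obstruction in $[X^+, i_+X\smsh_B i_-X]_{R(B)}$ are nonetheless correct.
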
 

\begin{proof} (Sketch). The diagram
\begin{equation} \label{eqn:cofibration}
(i_+X \vee_B i_-X) @>\subset >> (i_+X \times_B i_-X) @>>> i_+X \smsh_B i_-X 
\end{equation}
is a cofibration sequence of $T(B\times S^0)$. It follows that if $X$ has a charged co-$H$ structure, then the composite
\[
X \to  i_+X \times_B i_-X @>>> i_+X \smsh_B i_-X
\]
is null-homotopic when considered as a morphism of $T(B\times S^0 \to B)$. Now
apply the functor ${}^+\: T(B\times S^0 \to B) \to R(B)$ to obtain the first part of the lemma.

For the second part, use the Blakers-Massey theorem to show that the diagram 
\eqref{eqn:cofibration}
forms a fiber sequence up through dimension $\le 3r+1$. In particular,
the sequence
\[
[X,i_+X \vee_B i_-X]_{T(B\times S^0\to B)} @>>> [X,i_+X \times_B i_-X ]_{T(B\times S^0\to B)} @>>> [X,i_+X \smsh_B i_-X]_{T(B\times S^0\to B)}
\]
is exact, where the third term is a pointed set. By the adjunction property
Corollary \ref{lem:adjunction_prop}, there is an isomorphism of pointed sets 
\[
[X,i_+X \smsh_B i_-X]_{T(B\times S^0\to B)} \cong [X^+,i_+X \smsh_B i_-X]_{R(B)} 
\]
Furthermore, another application of Blakers-Massey theorem 
shows that stabilization induces an isomorphism
\[
[X^+,i_+X \smsh_B i_-X]_{R(B)} \to \{X,i_+X \smsh_B i_-X\}_{R(B)}\, .\qedhere
\]
\end{proof}

For an object $X\in T(B \times S^0\to B)$ consider 
the canonical morphism
\[
c_X \: S_B O_B X \to X\, .
\]

\begin{prop} \label{prop:reformulation} Assume $X$ is 
fibrant and cofibrant. Then $X$ can be given a charged co-$H$ structure if and only if
$c_X$ admits a section up to  homotopy inside $T(B \times S^0\to B)$.
\end{prop}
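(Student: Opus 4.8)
The plan is to exploit the adjunction $(S_B, O_B)$ together with the characterization of charged co-$H$ structures in terms of the reduced diagonal, and then to identify "section of $c_X$ up to homotopy" with "triviality of $\tilde\Delta$." First I would observe that the counit $c_X\colon S_B O_B X \to X$ is, by definition, a morphism of $T(B\times S^0\to B)$, and that $S_B O_B X$ is a fiberwise suspension, hence — as noted in the remark just before Theorem \ref{bigthm:main} — its associated reduced diagonal $\widetilde{\Delta}_{S_B O_B X}$ is null-homotopic. The reduced diagonal is natural in $X\in T(B\times S^0\to B)$ (it is built from the honest diagonal $\Delta\colon X\to i_+X\times_B i_-X$, which is natural, followed by the collapse to the smash product). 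Therefore, if $\sigma\colon X\to S_B O_B X$ is a homotopy section of $c_X$, naturality of $\widetilde{\Delta}$ applied to $c_X$ and $\sigma$ gives $\widetilde{\Delta}_X \simeq (i_+c_X\smsh_B i_-c_X)\circ \widetilde{\Delta}_{S_B O_B X}\circ \sigma^+$ in $[X^+, i_+X\smsh_B i_-X]_{R(B)}$, up to the identifications $i_\pm(S_B O_B X)$ with the relevant objects; since $\widetilde{\Delta}_{S_B O_B X}$ is null, so is $\widetilde{\Delta}_X$. Passing to the stable group, $[\widetilde{\Delta}]$ is trivial in $\{X^+, i_+X\smsh_B i_-X\}_{R(B)}$, and then Lemma \ref{lem:diagonal=coH} (which needs only the triviality direction here, so no connectivity or dimension hypothesis is required) supplies a charged co-$H$ structure on $X$. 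This gives one implication.

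For the converse, suppose $X$ carries a charged co-$H$ structure $p\colon X\to i_+X\vee_B i_-X$. I would reconstruct a homotopy section of $c_X$ directly, mimicking the classical argument that a co-$H$ structure produces a comultiplication and hence a "fold/pinch" factorization through a suspension. The key point is that $i_+X\vee_B i_-X$ is itself visibly a fiberwise half-smash or, more precisely, receives a natural comparison to $S_B O_B X$: the wedge $i_+X\vee_B i_-X$ is the pushout of $X\leftarrow B\to X$ along the two section maps $s_\pm$, and this pushout maps canonically into $S_B O_B X$ because a point of $X$ together with a choice of which cone-end it sits over determines a (constant-in-$B$) path to the corresponding point of $B\times S^0$, i.e. an element of $O_B X$ cone-coordinate. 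Concretely, I would build a morphism $\psi\colon i_+X\vee_B i_-X \to S_B O_B X$ of $T(B\times S^0\to B)$ whose composite with $c_X$ is the canonical fold map $i_+X\vee_B i_-X \to X$ (which is the identity on each wedge summand), and then set $\sigma = \psi\circ p$. The composite $c_X\circ\sigma = (\text{fold})\circ p$ is, by the defining property of a charged co-$H$ structure, homotopic in $[X, X]_{T(B\times S^0\to B)}$ to $(\text{fold})\circ(\text{restriction to }\vee\text{ of }\Delta)$; but folding the diagonal $X\to i_+X\times_B i_-X$ back down — after observing the diagonal lands in the wedge up to homotopy, by hypothesis — recovers the identity on $X$. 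Hence $c_X\circ\sigma\simeq \mathrm{id}_X$, as desired.

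The main obstacle I expect is making the map $\psi\colon i_+X\vee_B i_-X\to S_B O_B X$ honest and coherent: one must check that the "cone-coordinate plus constant path" recipe actually produces a continuous morphism over $B$ compatible with the charge structures $B\times S^0\to(-)$ on both sides, and that the composite with $c_X$ is genuinely the fold map (not merely homotopic to it after further choices). This is where fibrancy/cofibrancy of $X$ is used — to guarantee that $O_B X$ has the homotopy type of the actual based (fiberwise) loop-type construction and that the adjunction counit $c_X$ models the derived counit, so that the point-set maps we write down compute the correct homotopy classes. A cleaner alternative, which I would fall back on if the explicit $\psi$ becomes unwieldy, is to argue entirely through the adjunction: a homotopy section of $c_X$ is the same datum, up to homotopy, as a map $S_B O_B X\to X$ splitting the counit, which by $(S_B, O_B)$-adjointness corresponds to a map $O_B X\to O_B X$ whose relevant composite is the identity — and then to show, using the co-$H$ structure to produce enough "coordinates," that the identity of $O_B X$ factors appropriately. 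Either way, the conceptual content is that "desuspending up to the counit" and "admitting a charged co-$H$ comultiplication" are the two faces of the same adjunction, and the proof is the book-keeping that identifies them.
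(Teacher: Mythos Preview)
Your argument has a genuine gap in each direction. For ``section $\Rightarrow$ co-$H$'': you correctly deduce from naturality that $\tilde\Delta_X$ is null-homotopic, but you then invoke Lemma~\ref{lem:diagonal=coH} to produce a co-$H$ structure. That is precisely the \emph{converse} direction of the lemma, and it requires $X$ to be $r$-connected with $\dim X\le 3r$ (it uses Blakers--Massey to make the cofiber sequence $i_+X\vee_B i_-X\to i_+X\times_B i_-X\to i_+X\smsh_B i_-X$ behave like a fiber sequence at $X$). Proposition~\ref{prop:reformulation} carries no such hypotheses, so this step is illegitimate; triviality of $\tilde\Delta$ alone does not lift $\Delta$ to the wedge in general. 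For ``co-$H$ $\Rightarrow$ section'': the map $\psi\: i_+X\vee_B i_-X\to S_BO_BX$ you propose is not constructed, and the heuristic you give (``a point of $X$ plus a cone-end determines a point of $O_BX$'') does not produce a path from $B\times\{-1\}$ to $B\times\{+1\}$. In fact the natural comparison map runs the other way, $S_BO_BX\to i_+X\vee_B i_-X$, and there is no evident $\psi$ splitting it. Your adjunction fallback is not fleshed out either.

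The paper's proof avoids both problems by establishing directly that the square
\[
\xymatrix{
S_BO_BX \ar[r]\ar[d]_{c_X} & i_+X\vee_B i_-X \ar[d] \\
X \ar[r]_-{\Delta} & i_+X\times_B i_-X
}
\]
is $\infty$-cartesian. This is done by replacing $\Delta$ by the fibration $W_BX\to i_+X\times_B i_-X$ (fiberwise endpoint evaluation on the fiberwise free path space) and computing the strict pullback as an explicit union $U\cup V$ with $U\cap V=O_BX$ and $U,V\simeq B$, hence weakly equivalent to $S_BO_BX$. Once the square is homotopy cartesian, both implications are immediate and hypothesis-free: a lift of $\Delta$ through the right vertical map is exactly the same datum, up to homotopy, as a lift of $\id_X$ through $c_X$. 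No appeal to Lemma~\ref{lem:diagonal=coH} or to any connectivity range is needed.
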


\begin{proof} It is enough show that there is an $\infty$-cartesian square
of $T(B\times S^0 \to B)$ of the form
\[
\xymatrix{
S_BO_B X \ar[r] \ar[d]_{c_X} & i_+X \vee_B i_-X  \ar[d]^{\cap} \\
X \ar[r]_<<<<<<{\Delta} &  i_+X \times_B i_-X
}
\]
To do this we convert $\Delta$ into a fibration. Let $W_BX$ be the space
of fiberwise paths in $X$. This is the space whose points are paths
$\lambda\: [-1,1] \to X$ such that the projection to $B$ is constant.
The inclusion $X \to W_BX$ given by the constant paths is a weak equivalence of $T(B\times S^0 \to B)$. Furthermore, the map $X \to i_+X \times_B i_-X$ factors as
\[
X @> \subset >> W_B X @> q >> i_+X \times_B i_-X\, ,
\]
where $q$ is the fibration given by $\lambda \mapsto (\lambda(0),\lambda(1))$.
It is therefore enough to identify
 the pullback of the diagram
\begin{equation} \label{eqn:SO_pullback}
W_BX @>>>  i_+X \times_B i_-X @<<< i_+X \vee_B i_-X
\end{equation}
with $S_BO_B X$ up to weak equivalence.
The pullback of \eqref{eqn:SO_pullback} 
may explicitly computed as an amalgmated union
\[
U \cup V
\]
in which $U$ is the space of paths $\lambda\: [-1,1]\to X$ 
such that $\lambda(-1) \in B \times \{-1\}$ and  
the projection $[-1,1] \to X \to B$ is  constant. Similarly 
$V$ is the space of paths $\lambda\: [-1,1] \to X$ such that
$\lambda(1) \in B\times \{+1\}$ and the projection $[-1,1] \to X \to B$ is  constant.
The intersection $U \cap V$ is the space of paths $\lambda\:[-1,1] \to X$ such that
$\lambda(-1) \in B \times \{-1\}$, $\lambda(1) \in B \times \{+1\}$ and the projection 
$[-1,1] \to X \to B$ is constant. Clearly, $U\cap V = O_B X$.
Furthermore, each of the projections $U \to B$, $V\to B$ 
 is a weak equivalence. In other words, $W_BX$ coincides up to weak equivalence
 with the homotopy pushout of the diagram
 \[
 B @<<< O_B X @>>> B \, .
 \]
that is, with $S_BO_B X$. 
\end{proof}

\begin{thm} \label{thm:desuspend} Let $X\in T(B \times S^0\to B)$ be an object and assume $B$ is connected. 
Assume $X$ is charged co-$H$, is $r$-connected, $\dim X \le 3r$ and $r \ge 1$. Then there is an object $Y \in T(\emptyset \to B)$ and a weak equivalence $S_B Y \simeq X$.
\end{thm}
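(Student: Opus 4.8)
The plan is to build $Y$ cell by cell, desuspending the attaching maps of $X$ as we go, with the metastable hypothesis $\dim X\le 3r$ guaranteeing that a fiberwise, unbased James--Hopf (``$EHP$'') sequence is exact in the relevant range, and with the vanishing of the reduced diagonal $\tilde\Delta$ supplying the vanishing of the desuspension obstructions. This is the fiberwise, unbased analogue of the Berstein--Hilton--Ganea argument.

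First I would pass from the charged co-$H$ hypothesis to the diagonal: by the easy half of Lemma \ref{lem:diagonal=coH}, $[\tilde\Delta]=0$ in $\{X^+,i_+X\smsh_B i_-X\}_{R(B)}$, and since $i_+X\smsh_B i_-X$ is $(2r+1)$-connected while $\dim X\le 3r$, the Freudenthal theorem identifies this stable group with the unstable one, so $\tilde\Delta\colon X^+\to i_+X\smsh_B i_-X$ is genuinely null-homotopic. If $\dim X\le 2r$ one already desuspends $X$ with no further hypotheses (iterating the fiberwise Freudenthal theorem of \cite{Klein_haef}), so we may assume $2r<\dim X\le 3r$. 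Next, because $X\to B$ is $(r+1)$-connected, we may choose a CW structure on $X$ (as an object of $T(B\times S^0\to B)$, up to weak equivalence) whose $r$-skeleton is $S_B B$ and whose remaining cells have dimension in $(r,3r]$; write $X^{(n)}$ for the $n$-skeleton, so $X^{(r)}\simeq S_B B = S_B Y_r$ with $Y_r:=B$.

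The induction: suppose $Y_{n-1}\in T(\emptyset\to B)$ is $(r-1)$-connected of dimension $\le n-1$, together with a weak equivalence $X^{(n-1)}\simeq S_B Y_{n-1}$. The $n$-cells of $X$ are attached along maps $\alpha_j\colon S^{n-1}_B\to X^{(n-1)}\simeq S_B Y_{n-1}$. I claim each $\alpha_j$ desuspends to $\beta_j\colon S^{n-2}_B\to Y_{n-1}$; granting this, since $S_B$ is a left adjoint (to $O_B$) it carries cell attachments to cell attachments one dimension higher, so
\[
X^{(n)}\;\simeq\; S_B Y_{n-1}\cup_{S_B\beta_j}(\text{cells})\;\cong\; S_B\!\left(Y_{n-1}\cup_{\beta_j}(\text{cells})\right)\;=:\;S_B Y_n,
\]
with $Y_n$ again $(r-1)$-connected of dimension $\le n$, and the induction proceeds, giving $X=X^{(3r)}\simeq S_B Y$ with $Y:=Y_{3r}$. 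For the desuspension claim, I would use that $O_B S_B Y_{n-1}$ is, over $B$, the fiberwise James construction on $Y_{n-1}$ (the fibers of $Y_{n-1}$ are connected since $r\ge 1$), whose quotient filtration has associated graded $Y_{n-1}\smsh_B\cdots\smsh_B Y_{n-1}$ ($k$-fold), which is $(kr-1)$-connected. Hence the second James stage suffices through dimension $\approx 3r$, and the resulting fiberwise $EHP$ sequence
\[
\pi^{B}_{n-2}(Y_{n-1})\xrightarrow{\ S_B\ }\pi^{B}_{n-1}(S_B Y_{n-1})\xrightarrow{\ H\ }\pi^{B}_{n-1}\!\left(\Sigma_B(Y_{n-1}\smsh_B Y_{n-1})\right)
\]
is exact for $n\le 3r$, by the Blakers--Massey estimate already used in Lemma \ref{lem:diagonal=coH}. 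So $\alpha_j$ desuspends iff its fiberwise Hopf invariant $H(\alpha_j)$ vanishes. To see $H(\alpha_j)=0$: $X^{(n-1)}\simeq S_B Y_{n-1}$ is a fiberwise suspension, so (in this range) the reduced diagonal of $X^{(n-1)}$ is null, whence by naturality $\tilde\Delta_X$ restricted to $(X^{(n-1)})^+$ is null and $\tilde\Delta_X$ factors through $X^+/(X^{(n-1)})^+\simeq\bigvee_j S^n_{B,j}$, recorded by the tuple $(\gamma_j)$ of its restrictions to the $n$-cells. A Berstein--Hilton type computation, carried out fiberwise and using the identifications of \S\ref{sec:prelim} together with Lemma \ref{lem:pm} to rewrite $i_+X\smsh_B i_-X$ in terms of $\Sigma_B(Y_{n-1}\smsh_B Y_{n-1})$, identifies $\gamma_j$ with the image of $H(\alpha_j)$ (up to the suspension isomorphisms valid in our range). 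Since $\tilde\Delta_X$ is null, each $\gamma_j=0$, so $H(\alpha_j)=0$ and $\alpha_j$ desuspends.

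The main obstacle is precisely that last step: setting up the fiberwise, unbased metastable $EHP$ sequence with the correct connectivity bookkeeping -- in particular handling the two ``charges'' $\pm$ and the role of Lemma \ref{lem:pm} -- and matching the desuspension obstruction $H(\alpha_j)$ with the component of the reduced diagonal $\tilde\Delta_X$ on the $j$-th cell. Both are standard over a point but need care in the fibered unbased setting; everything else (the cellular induction, the base case $X^{(r)}\simeq S_B B$, the Freudenthal/Blakers--Massey range estimates) is bookkeeping of the kind already present in \S\ref{sec:prelim} and in the proof of Lemma \ref{lem:diagonal=coH}.
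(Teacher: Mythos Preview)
Your strategy is genuinely different from the paper's. The paper does not induct on cells and never passes through $\tilde\Delta$; it uses the charged co-$H$ hypothesis directly via Proposition~\ref{prop:reformulation} to obtain a homotopy section $s\colon X\to S_BO_BX$ of the counit, forms the homotopy pullback $Z$ of
\[
O_BX\xrightarrow{\ u_{O_BX}\ }O_BS_BO_BX\xleftarrow{\ O_Bs\ }O_BX,
\]
uses Blakers--Massey (and the dual cubical version from \cite{Goodwillie}) to show that the adjoint $S_BZ\to X$ is $3r$-connected, and then invokes \cite[th.~4.2]{Klein_haef} to compress $Z$ to a $Y$ with $S_BY\simeq X$. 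No EHP sequence, no Hopf invariants, no skeleta.

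Your cellular Berstein--Hilton approach is plausible in outline, but there is a gap beyond the one you flag. The expression ``$Y_{n-1}\smsh_B Y_{n-1}$'' is not defined: $Y_{n-1}$ lives in $T(\emptyset\to B)$, not in $R(B)$, so the internal smash product of \S\ref{sec:prelim} does not apply. Fiberwise over $b$ the adjoint $O_BS_BY_{n-1}$ has fiber $\Omega S(Y_b)\simeq\Omega\Sigma((Y_b)_+)$, and $(Y_b)_+$ is disconnected, so the James model as you state it is not available (already for $Y_b=\ast$ one has $J(S^0)\simeq\mathbb N\not\simeq\mathbb Z\simeq\Omega S^1$). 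To recover a James filtration with quotients of connectivity $kr-1$ you must first choose a section of $Y_{n-1}\to B$ (possible, since this map is $r$-connected) and pass to the reduced setting; but then your Hopf invariant $H(\alpha_j)$ is defined relative to that auxiliary section, while $\tilde\Delta_X$ is the intrinsic charged diagonal, and matching the two is precisely what would need proof---it is not bookkeeping. (There is also a smaller issue with your base case: building $S_BB$ from $B\times S^0$ by ordinary cells requires cells up to dimension $\dim B+1$, which need not be $\le r$.) The paper's global construction sidesteps all of this by never leaving the unbased adjunction $(S_B,O_B)$.
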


\begin{proof} In what follows we may assume $X$ is both fibrant and cofibrant. 
We adapt the proof of \cite[thm.~2.1]{Klein_susp}. Since $X$ is charged co-$H$,
we can choose a  section up to homotopy $s\: X\to S_B O_B X$. Applying $O_B$ gives a map
$O_Bs\: O_B X \to O_BS_BO_B X$. We also have a canonical map $u_{O_B X}\: O_B X \to 
 O_BS_BO_B X$. Let $Z$ be the homotopy pullback of the diagram
\begin{equation} \label{eqn:pullback}
 O_B X @> u_{O_B X}  >> O_BS_BO_B X @< O_B s << O_B X\, .
 \end{equation}
Then $Z$ is an object of $T(\emptyset\to B)$, and we have an $\infty$-cartesian
square
\begin{equation} \label{eqn:pullback_square}
\xymatrix{
Z \ar[r]^j \ar[d]_i & O_B X \ar[d]^{ O_B s  } \\
O_B X \ar[r]_<<<<{u_{O_B X} } & O_BS_BO_B X
}
\end{equation}
Each map of this square is $(2r-1)$-connected, by a straightforward Blakers-Massey argument
which which omit. Again by the Blakers-Massey theorem, we find that this square is
$(4r-1)$-cocartesian.  Consider the adjoint $\hat j\: S_B Z \to X$.
\medskip

\noindent {\it Claim:} Assume $r \ge 1$. Then the map 
$\hat j\: S_B Z \to X$ is $3r$-connected.
\medskip

Consider the diagram

\begin{equation} \label{eqn:two_squares}
\xymatrix{
S_BZ \ar[rr]^{S_Bj} \ar[d]_{S_Bi} && S_B O_B X \ar[d]^{S_B O_Bs }\ar[rr]^{c_X} && X \ar[d]^{s}\\
S_B O_BX \ar[rr]_{S_B u_{O_BX} } && S_B O_BS_BO_B X \ar[rr]_{c_{S_B O_BX}} && S_BO_B X\, .
}
\end{equation}

The bottom composite appearing in \eqref{eqn:two_squares} is the identity map.
The left-hand square of \eqref{eqn:two_squares}
 is $(4r)$-cocartesian because it is the suspension of 
the $(4r-1)$-cocartesian square \eqref{eqn:pullback_square}. 

If we can show that the 
right-hand square of \eqref{eqn:two_squares} is $(3r+1)$-cocartesian, then
it will follow that the outer square of \eqref{eqn:two_squares} is also $(3r+1)$-cocartesian.
Since the bottom composite is the identity map, and each vertical map of the square
is $2$-connected it will follow from \cite[lem.~5.6]{Klein_haef} that 
the top composite, which is $\hat j$, is $(3r)$-connected, yielding the claim.

The right-hand square of \eqref{eqn:two_squares} is $(3r+1)$-cocartesian by 
a direct application of the dual Higher Blakers-Massey theorem \cite[th.~2.6]{Goodwillie} to the  $3$-cube
\[
\xymatrix{
 & O_B X\ar[rr]\ar'[d][dd] \ar[dl]
   & & B \ar@{=}[dd] \ar[dl]
\\
B\ar[rr] \ar@{=}[dd]
 & & X \ar[dd]
\\
 & O_BS_BO_BX \ar'[r][rr]\ar[dl]
   & & B\ar[dl]
\\
B \ar[rr]
 & & S_BO_BX 
}
\]
in which the map from the top face to the bottom one is induced by $s$.
We leave the details of this to the reader. This establishes the claim.

To complete the proof of Theorem \ref{thm:desuspend}, we only need
to apply \cite[th.~4.2]{Klein_haef} to the $(3r)$-cocartesian square
\[
\xymatrix{
Z \ar[r] \ar[d] & B\ar[d] \\
B \ar[r] & X 
}
\]
This gives a space $Y$ and a $(3r-2)$-connected map $Y \to Z$ such that the composite
\[
S_B Y \to S_B Z \to X
\]
is a weak equivalence.
\end{proof}
 
\begin{proof}[Proof of Theorem \ref{bigthm:main} and Addendum \ref{bigadd:main}]
We only prove Theorem \ref{bigthm:main}, as the extension to the relative case is 
basically the same.  Let $X\in T(B\times S^0 \to B)$. If
$X \simeq S_B Y$ then $X$ admits a charged co-$H$-structure so $[\tilde \Delta]$ is trivial by the first part of Lemma \ref{lem:diagonal=coH}.

Conversely, assume $[\tilde \Delta] = 0$,
with $X$ $r$-connected and $\dim X \le 3r$.  By the second part of Lemma \ref{lem:diagonal=coH}, $X$ admits a charged co-H structure and by Theorem \ref{thm:desuspend},  $X \simeq S_B Y$.
\end{proof}

\section{Duality \label{sec:duality}}

\subsection*{$N$-duality}
Suppose $N$ is a connected compact  manifold with boundary $\partial N$ (or more generally,
a finite Poincar\'e pair). Then
$N \in T(\partial N \to N)$, and $N^+ \in R(N)$ is just
the double $N \cup_{\partial N} N$ (this is also weakly equivalent
to $i_-(S_N\partial N)$).
  
Suppose we are given finitely dominated 
objects $U,U^\ast\in R(N)$ (which we can asssume to be 
fibrant and cofibrant), and an element
\[
d\in \{\Sigma^j_N N^+,U \smsh_N U^\ast\}_{R(N)}\, .
\]
 
\begin{defn}[cf.~\cite{Klein_dualizing2}]\label{defn:duality_defn} The element $d$ is 
said to be an {\it $N$-duality} if the operation $f\mapsto (f\smsh_{N} \text{id})\circ d$ induces an isomorphism
\begin{equation} \label{eqn:formulation1}
\{U,E\}_{R(N)} \cong \{\Sigma_N^j N^+,E\smsh_N U^\ast\}_{R(N)}  
\end{equation}
 for all objects $E\in R(N)$ which are fibrant and cofibrant.
 The integer $j$ is called the {\it indexing parameter} of $d$.
 \end{defn}
 
 \begin{rem} The object $U\smsh_N U^\ast$ need not be fibrant. 
 Hence, it is too much to hope for $d$ to be represented 
 by a fiberwise stable morphism $ \Sigma^j_N N^+ \to U \smsh_N U^\ast$.
 In general, $d$ may be represented by a fiberwise stable morphism
 \begin{equation}\label{eqn:rep}
 \Sigma^j_N N^+ \to (U \smsh_N U^\ast)^f \, ,
 \end{equation}
 where the target is a fibrant approximation of $U\smsh_N U^\ast$. 
 In this case, we call any representative \eqref{eqn:rep} an 
 {\it $N$-duality map.}
 \end{rem}
 
 If $N$ is connected, the above formulation can be re-expressed in terms of the Quillen
 equivalence \eqref{eqn:Quillen_equiv}: 
 choose a universal principal bundle 
 \[
 p\: \tilde N \to N
 \]
 with structure group $G$ (in particular, this identifies 
 $N$ with $BG$, so $G$ models the loop space $\Omega N$). Then, with respect
 to the Quillen equivalence \eqref{eqn:Quillen_equiv}, $U, U^\ast$ correspond to 
 objects $\tilde U,\tilde U^\ast \in R^G(\ast)$ and $d$ corresponds  to an equivariant stable
 homotopy class 
 \begin{equation}\label{eqn:formulation1a}
 \tilde d\in \{\Sigma^j \tilde N/\partial \tilde N , \tilde U \smsh \tilde U^\ast \}_{R^G(\ast)}\, ,
 \end{equation}
 where $\partial \tilde N$ is the pullback of the universal bundle
 along $\partial N \to N$.
 The $N$-duality condition \eqref{eqn:formulation1}
  can then be re-expressed as follows: for all objects $\tilde E \in R^G(\ast)$ the operation $f\mapsto (f\smsh\text{id}) \circ \tilde d$ yields an isomorphism
 \begin{equation} \label{eqn:formulation2}
 \{\tilde U,\tilde E\}_{R^G(\ast)} \cong  \{\Sigma^j\tilde N/\partial \tilde N,\tilde E \smsh \tilde U^\ast\}_{R^G(\ast)} \, .
 \end{equation}
 There is a technical advantage in the equivariant setting: 
 all objects are fibrant.
 So $\tilde d$ admits a representative equivariant stable map
 \[
 \Sigma^j \tilde N/\partial \tilde N  \to   \tilde U \smsh \tilde U^\ast\, .
 \]
 When there is no confusion we will 
 abuse  notation and denote the representative
 by $\tilde d$.
 
In  some of the proofs appearing below,  it will be more convenient to rephrase \eqref{eqn:formulation2}
 in terms of stable function spaces. For cofibrant objects 
 $A,B \in R^G(\ast)$, we let $F^{\text{st}}(A,B)^G$ be the 
 function space of stable, based, equivariant maps; a point in this space is represented
 by a based equivariant map $\Sigma^k A \to \Sigma^k B$ for some $k \ge 0$. 
 Then the set of path  components of this 
 space is identified with $\{A,B\}_{R^G(\ast)}$, and it's not hard to see that 
 \eqref{eqn:formulation2} is equivalent to  the statement that the operation 
 $f\mapsto (f\smsh\text{id}) \circ \tilde d$ yields a weak homotopy equivalence of function spaces
 \begin{equation} \label{eqn:formulation3}
F^{\text{st}}(\tilde U,\tilde E)^G
 \simeq  
 F^{\text{st}}(\Sigma^j\tilde N/\partial \tilde N,\tilde E \smsh \tilde U^\ast)^G 
 \end{equation}
 for any object $\tilde E \in R^G(\ast)$.

 \subsection*{Equivariant/fiberwise duality} There is another kind of duality which the first author has called {\it equivariant duality} \cite{Klein_immersion}. 
 Suppose we are given  cofibrant objects 
 $\tilde X,\tilde Y \in R^G(\ast)$ and a (stable) map of based spaces
\[
\delta\: S^d \to \tilde X \smsh_G \tilde Y\, ,
\]
where $ \tilde X \smsh_G \tilde Y$ is the orbit space of $G$ acting diagonally on the smash product.
We say that $\delta$ is an {\it equivariant duality map} if for all objects $\tilde E\in R^G(\ast)$,
the operation $f\mapsto (f\smsh_G \text{id})\circ \delta$ induces an isomorphism
of abelian groups
\begin{equation} \label{eqn:form2_iso}
\{\tilde X,\tilde E\}_{{R^{G}(\ast)}} \cong \{S^d,\tilde E\smsh_G \tilde Y\}_{{R(\ast)} }\, .
\end{equation}

In terms of the Quillen equivalence \eqref{eqn:Quillen_equiv}, we can reformulate 
\eqref{eqn:form2_iso}
 in the fiberwise setting. 
and $X, Y \in R(N)$ correspond to $\tilde X,\tilde Y\in R^G(\ast)$, then $\tilde X \smsh_G \tilde Y$
is identified with $(X\smsh_N Y)/N$, i.e., the (homotopy) cofiber of 
the structure map $N \to X\smsh_N Y$. Hence, with respect to the 
identifications $\delta$ can be rewritten as
 \[
 \delta'\: S^d \to (X \smsh_N Y)/N \, ,
 \]
 and the isomorphism \eqref{eqn:form2_iso} becomes
 \begin{equation} 
 \{X,E\}_{{R(N)}} \cong \{S^d,(E\smsh_N Y)/N\}_{{R(\ast)} }
 \end{equation}
 for all objects $E\in R(N)$. In this case, we say that $\delta'$ is a {\it fiberwise duality map}.

\subsection*{Relating the two kinds of duality}
We now explain how  $N$-duality is related to equivariant duality.
For the following description,
 we assume the reader is familiar with the basics of the theory of fiberwise spectra.
Let $\tau$ denote the tangent bundle of $N$ and recall that
$S^{\tau}_N \in R(N)$ is the fiberwise one-point compactification of 
$\tau$. If we apply functions into the sphere spectrum $S^0$ fiberwise, we obtain
a fiberwise spectrum $S^{-\tau}_N$ and we can consider the fiberwise smash product
$S^{-\tau}_N \smsh_N N^+$ which we can conveniently denote as $\Sigma^{-\tau}_N N^+$.
If we collapse the ``zero section'' $N \subset \Sigma^{-\tau}_N N^+$ to a point, we obtain
$N^{-\tau}/(\partial N)^{-\tau}$, where $N^{-\tau}$ is the Thom spectrum of the stable normal bundle of $N$ and $(\partial N)^{-\tau}$ is the Thom spectrum of the pullback of $-\tau$ to $\partial N$. Hence we have a degree one stable map 
$$
\alpha\: S^0 \to (\Sigma^{-\tau}_N N^+)/N
$$
which represents the fundamental class of $N$ (since $H_0((\Sigma^{-\tau}_N N^+)/N)$ is isomorphic to 
$H_d(N,\partial N)$, where $d = \dim N$, and coefficients are twisted by the orientation bundle). 

Suppose now that $d\: \Sigma^j_N N^+ \to (U\smsh_N U^\ast)^f$ is any stable morphism of $R(N)$. Consider the composite
\[
\delta\: S^0 @>\alpha >>  (\Sigma^{-\tau}_N N^+)/N @>(\Sigma^{-\tau}_N d)/N >>  
(U \smsh_N \Sigma^{-\tau}_N U^\ast)^f/N \simeq (U \smsh_N \Sigma^{-\tau}_N U^\ast)/N 
\]

\begin{lem} \label{lem:relation} The map $d$ is an $N$-duality if and only if  
$\delta$ is a fiberwise duality.

\end{lem}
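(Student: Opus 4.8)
The plan is to deduce Lemma \ref{lem:relation} from the characterizations of the two dualities in terms of isomorphisms on stable homotopy groups, by a direct change-of-variables computation using the fundamental class $\alpha$ and a Thom isomorphism. The key observation is that $\alpha\colon S^0 \to (\Sigma^{-\tau}_N N^+)/N$ is itself a fiberwise duality map, in the sense that for every $E \in R(N)$ the operation $f \mapsto (f\smsh_N \mathrm{id})\circ \alpha$ induces an isomorphism $\{E^?, \Sigma^\tau_N E\}_{R(\ast)} $-type statement; more precisely, $\alpha$ exhibits the pair $(\Sigma^j_N N^+, \Sigma^{j-\tau}_N N^+)$ as fiberwise dual via the Atiyah–Wall duality of the manifold $N$. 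So the core of the argument is: composing an $N$-duality map $d$ with the fiberwise duality $\alpha$ yields a fiberwise duality $\delta$, and conversely.

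First I would record the fiberwise/equivariant version of Atiyah duality: passing through the Quillen equivalence \eqref{eqn:Quillen_equiv}, $\alpha$ corresponds to an equivariant stable map $S^0 \to (\tilde N/\partial\tilde N)\smsh_G S^{-\tilde\tau}$ (the Thom spectrum $N^{-\tau}/(\partial N)^{-\tau}$), and the statement that $\alpha$ represents the fundamental class is exactly the statement that this is an equivariant duality map between the $G$-spectrum $\Sigma^\infty(\tilde N/\partial\tilde N)$ and the $G$-spectrum $S^{-\tilde\tau}$. This is standard (Atiyah–Wall duality with local coefficients), but I would cite it from \cite{Klein_immersion} or \cite{Klein_dualizing2} rather than reprove it. In equivariant function-space language this says: for all $\tilde E$, smashing with $\alpha$ gives a weak equivalence
\[
F^{\mathrm{st}}(\tilde E, \Sigma^j \tilde N/\partial\tilde N)^G \simeq F^{\mathrm{st}}(S^0, (\Sigma^{j-\tilde\tau}_? \tilde E)\smsh_G (\ast))^{\,},
\]
or rather the version matching \eqref{eqn:form2_iso}.

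Next I would run the two directions. For the ``only if'' direction, assume $d$ is an $N$-duality, so by \eqref{eqn:formulation2} the operation $f\mapsto (f\smsh\mathrm{id})\circ \tilde d$ is an isomorphism $\{\tilde U,\tilde E\} \cong \{\Sigma^j\tilde N/\partial\tilde N, \tilde E\smsh \tilde U^\ast\}$ for all $\tilde E$. Apply this with $\tilde E$ replaced by $\Sigma^{-\tilde\tau}\tilde E$ (i.e. work with fiberwise spectra, which is why one needs the spectrum-level formulations), getting $\{\tilde U, \Sigma^{-\tilde\tau}\tilde E\} \cong \{\Sigma^j \tilde N/\partial\tilde N, \Sigma^{-\tilde\tau}\tilde E \smsh \tilde U^\ast\}$. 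Now compose with the Atiyah-duality isomorphism for $\alpha$ applied to the object $\Sigma^{-\tilde\tau}\tilde E\smsh\tilde U^\ast$, which identifies the right-hand group with $\{S^0, \Sigma^{-\tilde\tau}\tilde E\smsh\tilde U^\ast \smsh_G (\text{twist})\} = \{S^0, (E\smsh_N \Sigma^{-\tau}_N U^\ast)/N \smsh ?\}$ — unwinding, this is exactly $\{S^0, (\tilde E\smsh_G(\cdots))\}$ paired against $\delta$. The point is that $\delta$ was defined precisely as the composite $(\Sigma^{-\tau}_N d)/N \circ \alpha$, so the composite isomorphism is smashing with $\delta$. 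Hence $\delta$ is a fiberwise duality. For the converse, I would simply reverse the argument: the Atiyah-duality map $\alpha$ is invertible in the appropriate derived sense, so $\delta$ being a fiberwise duality forces $d$ (after smashing with $\Sigma^{-\tau}_N$ and collapsing $N$, operations which are themselves equivalences on the relevant mapping spectra) to be an $N$-duality. Symmetry of the two sides under $N$-duality — which requires knowing the duality relation is itself symmetric, available from \cite{Klein_dualizing2} — lets one interchange $U$ and $U^\ast$ freely where needed.

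The main obstacle I expect is bookkeeping the twisted coefficients and the shift by $-\tau$ cleanly: one must be careful that $(\Sigma^{-\tau}_N U^\ast)^f/N$ really is a fibrant model for the target of $\delta$ and that collapsing the zero section commutes (up to the stated weak equivalences) with the fiberwise smash, and that the identification $H_0((\Sigma^{-\tau}_N N^+)/N)\cong H_d(N,\partial N)$ with orientation-twisted coefficients is genuinely what makes $\alpha$ a duality map rather than just a degree-one map. I would handle this by staying entirely in the equivariant category $R^G(\ast)$, where everything is fibrant and the only subtlety is the $G$-action on $S^{-\tilde\tau}$; the whole lemma then reduces to: (i) $\alpha$ is an equivariant duality (cite), (ii) the composite of two ``duality isomorphisms'' is a ``duality isomorphism'' and conversely if one of the two pieces is known invertible. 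Step (i) is the real content and I am importing it; step (ii) is formal.
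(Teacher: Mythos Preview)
Your approach is essentially the same as the paper's: both proofs factor the homomorphism induced by $\delta$ as the homomorphism induced by $d$ followed by operations (fiberwise smash with $S^{-\tau}_N$, collapse $N$, precompose with $\alpha$) that are always isomorphisms because $\alpha$ is the equivariant/Poincar\'e duality map for $N$, cited from \cite{Klein_immersion}, \cite{Klein_dualizing}. The paper packages this more cleanly as a composite $c\circ b\circ a$ with $a$ induced by $d$, $c\circ b\circ a$ induced by $\delta$, and $c\circ b$ always an isomorphism---your summary ``(i) $\alpha$ is an equivariant duality (cite), (ii) the composite of two duality isomorphisms is a duality isomorphism and conversely if one factor is known invertible'' is exactly this, though your intermediate substitution $\tilde E\mapsto \Sigma^{-\tilde\tau}\tilde E$ is not quite the right bookkeeping (you want $\{\tilde U,\tilde E\}$ on the left, not $\{\tilde U,\Sigma^{-\tilde\tau}\tilde E\}$; the paper instead smashes the entire right-hand side with $S^{-\tau}_N$).
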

\begin{proof} Consider the diagram of abelian groups
{\small
\[
\xymatrix{
\{X,E\}_{R(N)} \ar[r]^(.4)a & \{\Sigma^j_N N^+ ,E\smsh_N Y\}_{R(N)} \ar[d]^b \\
& \{\Sigma^j_N \Sigma^{-\tau}_N N^+ ,E\smsh_N \Sigma^{-\tau}_N Y\}_{R(N)}
\ar[r]_(.52)c & 
\{S^j,(E\smsh_N \Sigma^{-\tau}_N Y)/N\}_{R(\ast)}\, ,
}
\]
} 

\noindent where the homomorphism $a$ is induced by $f\mapsto (f\smsh_N \text{id})\circ d$, the homomorphism
$b$ is given by fiberwise smashing with $S^{-\tau}_N$
and the homomorphism $c$ is induced by collapsing out $N$ and pre-composing with $\alpha$.
It is easy to see that the map $b$ is an isomorphism. The composite $c\circ b\circ a$
is the homomorphism induced by $\delta$. Consequently, the lemma will follow if we can show 
that $c\circ b$ is an isomorphism. 
By the Quillen equivalence \eqref{eqn:Quillen_equiv}, the homomorphism $c \circ b$ corresponds to the isomorphism 
\[
\{\Sigma^j  \tilde N/\partial \tilde N, \tilde E\smsh\tilde Y\}_{R^G(\ast)}
\cong \{S^j, \tilde E\smsh_G \Sigma^{-\tau} Y\}_{R(\ast)}
\]
induced by the equivariant duality map
\[
S^j \to   \tilde N/\partial \tilde N  \smsh_G S^{-\tau}
\]
that induces Poincar\'e duality for $N$ (see \cite{Klein_immersion}, \cite{Klein_dualizing}). Here, $S^{-\tau}$ is the spectrum with $G$-action
which corresponds to $S^{-\tau}_N$ under the Quillen equivalence \eqref{eqn:Quillen_equiv}. 

\end{proof}

We list  some basic properties of $N$-duality  maps.

\begin{enumerate}[label=(\alph*), leftmargin=2cm]
\item If $d\: \Sigma^j_N N^+ \to (U \smsh_N U^\ast)^f$ is an $N$-duality map, then so is the map
\[
d^t\: \Sigma^j_N N^+@>>> (U\smsh_N U^\ast)^f  @> \text{twist} >> (U^\ast\smsh_N U)^f\, .
\]
Hence, there is no ambiguity in saying that
 $U$ and $U^\ast$ are $N$-dual to each other.
\item Suppose that $d'\: \Sigma^k_N N^+ \to (V\smsh_N V^\ast)^f$ is another $N$-duality map.
Then $d$ and $d'$ induce an {\it umkehr correspondence} 
\begin{equation} \label{eqn:umkehr}
\{U,V\}_{R(N)} \cong \{\Sigma^j_N V^\ast,\Sigma^k_N U^\ast\}_{R(N)}\, .
\end{equation}
\item Given a finitely dominated object $U \in R(N)$, there is an integer $j \gg 0$, a finitely dominated object $U^\ast \in R(N)$ and an $N$-duality map $d\: \Sigma^j_N N^+ \to U \smsh_N U^\ast$.
\item  If $U^\ast$ is $N$-dual to $U$ and  $U'$ is another $N$-dual
 to $U$ (with respect to the same indexing parameter $j$), then $U^\ast$ and $U'$ are stably weak equivalent in $R(N)$. This means that there is 
 a weak equivalence $\Sigma_N^k U^\ast \simeq \Sigma_N^k U'$ for $k$ suitably large. In particular, the stable $N$-dual is unique (in fact, up to contractible choice).
 \item Stably, fiberwise duals  preserve homotopy cofiber sequences. This means that if 
 $U \to V \to W$ is a homotopy cofiber sequence of $R(N)$, then there is a homotopy cofiber sequence of corresponding $N$-duals $W^\ast \to V^\ast \to U^\ast$ (where the indexing parameter $j$ is large and is the same for all three objects).
 \item \label{ha} If $U,U^\ast \in R(N\times I)$ are $(N\times I)$-dual with indexing parameter $j = 0$
 and $p\: N\times I \to N$ is the projection, then $p_\ast U$ and $p_\ast U^\ast$ are $N$-dual with
 indexing parameter $j=1$ (the indexing parameter changes because $p_\ast(N\times I)^+ \cong \Sigma_N N^+$).
\end{enumerate}

The proofs  of most these properties have appeared elsewhere (cf.\ 
\cite{Klein_dualizing2}, \cite{Klein_dualizing}, \cite{Klein_immersion}). 
We now explain the umkehr correspondence \eqref{eqn:umkehr}, 
since crucial use is made of it in this paper. The idea is to apply the duality isomorphisms
simultaneously  to get
\[
\{U,V\}_{R(N)} @> \cong >> \{\Sigma^j N^+,V\smsh_N U^*\}_{R(N)}  @< \cong << \{\Sigma^j V^\ast,\Sigma^k U^\ast\}_{R(N)}\, ,
\]
where the first isomorphism is given by $(f\mapsto f\smsh_N \text{id}_{U^\ast})\circ d$
and the second one by $g\mapsto (\text{id}_V \smsh_N g)\circ d'$.

\begin{rem}
Although similar in spirit, the above notions of duality
should not be confused with the fiberwise duality theory of Becker and Gottlieb \cite{Becker-Gottlieb}---
the latter uses a different concept of  finiteness: an object $U\in R(N)$
is {\it Becker-Gottlieb  finitely dominated} if the homotopy fiber of the map $U \to N$ is a finitely dominated space. Our notion of finiteness is more general: for example, if $N$ 
is closed and of positive dimension, then the wedge
$N \vee S^j \in R(N)$ is finite in our sense but not even finitely dominated in the Becker-Gottlieb sense.
\end{rem}

The main source of examples arises from embedding theory.
Suppose we have a codimension zero compact manifold decomposition of $(N,\partial N)$ as
\[
(U,\partial_0 U) \cup (V_0 ,\partial_0 V)\, ,
\]
in which $\partial U = \partial_0 U \cup_{\partial_{01} U} \partial_1 U$,
$\partial V = \partial_0 V \cup_{\partial_{01} V} \partial_1 V$ and
\[
(U, \partial U) \cap  (V, \partial V) = (\partial_1 U,\partial_{01} U)  = (\partial_1 V,\partial_{01} V) \, .
\]
In particular, when $\partial_0 U = \emptyset$ we can think of this as giving a codimension
zero embedding of $U$ in $N$.  For $i = 0,1$, define $U/\!\!/\partial_i U \in R(N)$ to be the pushout of
$N @<<< \partial_i U \to U$. Then there is an $N$-duality map
\begin{equation} \label{eqn:N-dual-from-embedding}
N^+ = N/\!\!/\partial N @>>> U/\!\!/\partial_0 U \smsh_N U/\!\!/\partial_1 U
\end{equation}
which arises as follows: there is an evident fiberwise diagonal map 
\[
(U,\partial U) \to  (U\times_N U,\partial_0 U \times_N U \cup U \times_N \partial_1 U) 
\]
inducing a fiberwise map
 $U/\!\!/\partial U \to U/\!\!/\partial_0 U \smsh_N U/\!\!/\partial_1 U$. If we precompose this
 with the fiberwise ``collapse'' map 
\[
N/\!\!/\partial N \to  N/\!\!/(V\cup \partial_0 U) \cong U/\!\!/\partial U
\]
we obtain a map of the form \eqref{eqn:N-dual-from-embedding}. The proof
that this map is an $N$-duality essentially follows from Lemma \ref{lem:relation}
in conjunction with \cite{Klein_immersion},\cite{Klein_dualizing}, \cite{Klein_dualizing2};
we omit the details.

Now suppose that $f\: K \to N$ is a fixed map and $(U,h)$ is an $h$-embedding of $f$, where
$U\subset N$ and $h\: K @> \sim >> U$. Let $C$ be the closure of the complement of $U$. Then we obtain a diagram
\begin{equation}\label{eqn:emb_diagram1}
\xymatrix{
& \partial U \ar[r] \ar[d] & C \ar[d] &\partial N \ar[l] \ar@{^{`}->}[ld]\\
K \ar[r]_h^\sim & U \ar[r] & N
}
\end{equation}
in which the displayed square is a pushout. In particular, we obtain a manifold
decomposition
\[
(N,\partial N) = (U,\emptyset) \cup (C,\partial N)
\]
Using $f$, we can identify $K^+ = K\amalg N$ with $U^+$. Furthermore, ``excision'' gives a weak equivalence
$U/\!\!/\partial U \simeq i_+(S_N C)$. Using these identifications together with
the $N$-duality map \eqref{eqn:N-dual-from-embedding}, we infer

\begin{lem} \label{lem:K-plus} $K^+ := K \amalg N$ is $N$-dual to $i_+(S_N C)$ with indexing parameter $j = 0$. 
\end{lem}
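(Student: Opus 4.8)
The plan is to obtain the lemma directly from the $N$-duality map \eqref{eqn:N-dual-from-embedding}, specialized to the manifold decomposition furnished by the $h$-embedding, after transporting its two smash factors along the identifications $U^+ \simeq K^+$ and $U/\!\!/\partial U \simeq i_+(S_N C)$ recorded just above the statement.

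First I would feed the codimension zero decomposition
\[
(N,\partial N) \;=\; (U,\emptyset) \cup (C,\partial N)
\]
into the construction preceding \eqref{eqn:N-dual-from-embedding}. In the bookkeeping of that construction this corresponds to $\partial_0 U = \emptyset$ and $\partial_1 U = \partial U$ (all of $\partial U$ is shared with $C$, since $U \subset \operatorname{int} N$), with $\partial_0 C = \partial N$ and $\partial_1 C = \partial U$ on the $C$ side. The recipe then produces a fiberwise stable morphism of $R(N)$
\[
N^+ \;=\; N/\!\!/\partial N \;\longrightarrow\; U/\!\!/\partial_0 U \smsh_N U/\!\!/\partial_1 U \;=\; U^+ \smsh_N U/\!\!/\partial U ,
\]
where the last identification uses that $U/\!\!/\partial_0 U$, being the pushout of $N \leftarrow \emptyset \to U$, is just $U \amalg N = U^+$, and that $\partial_1 U = \partial U$. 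By the discussion following \eqref{eqn:N-dual-from-embedding} this map (or its fibrant replacement) is an $N$-duality, and since no fiberwise suspension intervenes its indexing parameter is $j = 0$.

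Next I would move the two smash factors across weak equivalences in $R(N)$. The homotopy equivalence $h\colon K \to U$ has the property that its composite with $U \subset N$ is homotopic to $f$, so $h$ is a morphism of $T(\emptyset \to N)$ up to homotopy; applying ${}^+$ gives a weak equivalence $K^+ \simeq U^+$, rewriting the first smash factor as $K^+$. For the second factor I would invoke the excision equivalence $U/\!\!/\partial U \simeq i_+(S_N C)$, which one checks by substituting $N \simeq U \cup_{\partial U} C$ into the homotopy pushout defining $S_N C$ and observing that the retraction to $N$ matches that of $U/\!\!/\partial U$. Since the defining property \eqref{eqn:formulation1} of an $N$-duality is invariant under replacing the two objects by weakly equivalent ones of $R(N)$, it follows that $K^+$ is $N$-dual to $i_+(S_N C)$ with $j = 0$; the symmetry of the duality relation (property (a) in the list) lets us state it in the order of the lemma.

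The only step that is not purely formal is the assertion that \eqref{eqn:N-dual-from-embedding} is genuinely an $N$-duality in this situation, which rests on Lemma \ref{lem:relation} together with the Poincar\'e-duality input cited there; for the present lemma this may simply be quoted. Everything else — the pushout manipulations, the dictionary $U/\!\!/\partial_i U$, and the compatibility of the excision equivalence with the retractive structures — is routine, so I do not anticipate a real obstacle.
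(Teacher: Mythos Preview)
Your argument is correct and follows essentially the same route as the paper: specialize the $N$-duality map \eqref{eqn:N-dual-from-embedding} to the decomposition $(N,\partial N) = (U,\emptyset)\cup(C,\partial N)$, then transport the smash factors along the identifications $U^+ \simeq K^+$ (via $h$) and $U/\!\!/\partial U \simeq i_+(S_N C)$ (via excision). The paper records exactly these identifications in the sentences immediately preceding the lemma and infers the result from them; your write-up simply makes the intermediate steps more explicit.
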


 If we reverse the roles of $K$ and $C$, we immediately obtain
 
 \begin{lem} \label{lem:C-plus} $C^+ = C \cup_{\partial N} N$ is $N$-dual to 
 $i_+S_N K$ with indexing parameter $j = 0$.
 \end{lem}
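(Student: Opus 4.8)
The plan is to deduce Lemma~\ref{lem:C-plus} from Lemma~\ref{lem:K-plus} by exploiting the symmetry of the manifold decomposition $(N,\partial N) = (U,\emptyset)\cup(C,\partial N)$. In Lemma~\ref{lem:K-plus}, the roles of the two codimension-zero pieces were $\partial_0 U = \emptyset$, $\partial_1 U = \partial U$ (so that $U/\!\!/\partial_0 U = U^+$) and $\partial_0 V = \partial N$, $\partial_1 V = \partial U$ (so that $V_0/\!\!/\partial_1 V = C/\!\!/\partial U \simeq i_+S_N C$ and $V_0/\!\!/\partial_0 V = C/\!\!/\partial N = C^+$). First I would swap the labelling of the two pieces in the general $N$-duality map \eqref{eqn:N-dual-from-embedding}, taking $U$ to play the role of the former $V_0$ and $C$ to play the role of the former $U$; concretely one writes down the decomposition with $\partial_0 C = \partial N$, $\partial_1 C = \partial U$ and applies \eqref{eqn:N-dual-from-embedding} with $C$ in the leftmost slot.

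The key identifications to record are: (i) $C/\!\!/\partial_0 C = C/\!\!/\partial N = C^+$, using the definition of $C^+$ given in the statement; (ii) $C/\!\!/\partial_1 C = C/\!\!/\partial U \simeq i_+(S_N K)$, which follows from the same ``excision'' weak equivalence used in the proof of Lemma~\ref{lem:K-plus} but now applied to the complementary piece: collapsing $C$ inside $N$ along $\partial U$ recovers $U/\!\!/\partial U$ on the nose, and $U \simeq K$ via $h$, so $C/\!\!/\partial U$ is weakly equivalent to the relevant fiberwise suspension construction on $K$; alternatively one cites the weak equivalence $W \simeq S_B C$ recalled in the introduction together with the reversal of roles. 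With these two identifications in hand, the $N$-duality map \eqref{eqn:N-dual-from-embedding} for the reversed decomposition reads
\[
N^+ = N/\!\!/\partial N \longrightarrow C/\!\!/\partial_0 C \smsh_N C/\!\!/\partial_1 C \simeq C^+ \smsh_N i_+(S_N K)\, ,
\]
and its being an $N$-duality with indexing parameter $j=0$ is exactly the content of the sentence ``The proof that this map is an $N$-duality essentially follows from Lemma~\ref{lem:relation} in conjunction with \cite{Klein_immersion}, \cite{Klein_dualizing}, \cite{Klein_dualizing2}'' preceding Lemma~\ref{lem:K-plus}; symmetry of the smash product (property (a) in the displayed list of properties of $N$-duality maps) then lets one read this as the assertion that $C^+$ and $i_+S_N K$ are $N$-dual to each other.

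I do not expect a genuine obstacle here: the lemma is explicitly flagged in the text as an immediate consequence of reversing the roles of $K$ and $C$. The one point that needs a line of care is identification (ii), i.e.\ checking that the ``excision'' weak equivalence $U/\!\!/\partial U \simeq i_+(S_N C)$ used for the original decomposition has a mirror $C/\!\!/\partial U \simeq i_+(S_N K)$; this is where the hypothesis that $(U,h)$ is an $h$-\emph{embedding} (so $h\colon K \xrightarrow{\sim} U$ is a homotopy equivalence compatible with the maps to $N$) is used, to replace $U$ by $K$ after performing the collapse. Everything else is bookkeeping with the definitions of $X^+$, $i_\pm$, and the pushout squares, together with the already-established fact that \eqref{eqn:N-dual-from-embedding} is an $N$-duality map. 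Hence the proof is a short ``reverse the roles of $K$ and $C$ in Lemma~\ref{lem:K-plus}'' argument.
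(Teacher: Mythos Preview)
Your proposal is correct and matches the paper's approach exactly: the paper offers no proof beyond the single sentence ``If we reverse the roles of $K$ and $C$, we immediately obtain'' Lemma~\ref{lem:C-plus}, and what you have written is precisely the unpacking of that reversal via the general $N$-duality map \eqref{eqn:N-dual-from-embedding} applied to the decomposition with $\partial_0 C=\partial N$, $\partial_1 C=\partial U$. One phrasing slip: in your identification (ii) the mirror excision gives $C/\!\!/\partial U \simeq N\cup_U N \simeq i_+(S_N U)\simeq i_+(S_N K)$, not ``$U/\!\!/\partial U$ on the nose''; with that corrected, everything goes through.
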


 Suppose next that we are given an $h$-embedding $(U,h)$ of $f_1\: K \to N \times I$. Let $W$ be the closure of the complement of $U$. Then we obtain a diagram similar to \eqref{eqn:emb_diagram1}:
 \begin{equation}\label{eqn:emb_diagram2}
\xymatrix{
& \partial U \ar[r] \ar[d] & W \ar[d] &\partial (N\times I) \ar[l] \ar@{^{`}->}[ld]\\
K \ar[r]_h^\sim & U \ar[r] & N\times I\, .
}
\end{equation}
It now follows from Lemma \ref{lem:C-plus} that 
$W \cup_{\partial (N \times I)} N \times I \in R(N\times I)$
is $(N\times I)$-dual to $i_+S_{N\times I} K$ with indexing parameter $j = 0$. Consequently, 
by property \ref{ha} above, we conclude

\begin{lem} \label{lem:w-plus} The object
\[  W^+ := W \cup_{\partial (N \times I)} N =
 p_\ast (W \cup_{\partial (N \times I)} N \times I)  \]  is $N$-dual to 
$p_\ast i_+S_{N\times I} K \simeq i_+ S_N K$ with indexing parameter $1$. \
\end{lem}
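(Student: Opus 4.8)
The plan is to deduce this directly from Lemma~\ref{lem:C-plus}, applied with $N$ replaced by $N\times I$, followed by the pushforward property~\ref{ha} of $N$-duality. The key observation is that $N\times I$ is itself a connected compact manifold with boundary $\partial(N\times I)=(\partial N\times I)\cup(N\times\partial I)$ --- after smoothing the corners along $\partial N\times\partial I$, or, more comfortably, working throughout with the underlying finite Poincar\'e pair, which is all that the duality theory of \S\ref{sec:duality} requires. With respect to this structure, $f_1\: K\to N\times I$ is a map of a finite complex into a compact manifold, $(U,h)$ is an $h$-embedding of $f_1$, and $W$ is the closure of its complement, so that the diagram \eqref{eqn:emb_diagram2} is exactly the analogue of \eqref{eqn:emb_diagram1} with $(N,C)$ replaced by $(N\times I,W)$.

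First I would quote Lemma~\ref{lem:C-plus} in this situation: it yields an $(N\times I)$-duality with indexing parameter $j=0$ between the objects
\[
W\cup_{\partial(N\times I)}(N\times I)\quad\text{and}\quad i_+S_{N\times I}K
\]
of $R(N\times I)$. Next I would push this duality forward along the projection $p\: N\times I\to N$ by invoking property~\ref{ha}: since these two objects are $(N\times I)$-dual with indexing parameter $0$, the objects $p_\ast\bigl(W\cup_{\partial(N\times I)}(N\times I)\bigr)$ and $p_\ast\bigl(i_+S_{N\times I}K\bigr)$ are $N$-dual with indexing parameter $1$, the shift by one coming from the identification $p_\ast(N\times I)^+\cong\Sigma_N N^+$.

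It then remains only to identify the two pushforwards. Here $p_\ast$ is the left pushforward $Y\mapsto Y\cup_{N\times I}N$, which preserves the pushouts in sight, so
\[
p_\ast\bigl(W\cup_{\partial(N\times I)}(N\times I)\bigr)\;=\;W\cup_{\partial(N\times I)}N\;=\;W^+\,,
\]
the gluing map $\partial(N\times I)\to N$ being the restriction of $p$; and $p_\ast\bigl(i_+S_{N\times I}K\bigr)\simeq i_+S_NK$ is the identification already recorded in the statement of Theorem~\ref{bigthm:compression}, valid because $p$ is a fiberwise homotopy equivalence and collapsing the $I$-coordinate on the $(N\times I)$-summand of the double mapping cylinder $S_{N\times I}K$ is a weak equivalence. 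Combining this with the displayed $(N\times I)$-duality and property~\ref{ha} gives the assertion.

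I do not expect a genuine obstacle here: essentially all of the content has been packaged into Lemma~\ref{lem:C-plus} and property~\ref{ha}. The only points that call for attention are the passage to $N\times I$ as a Poincar\'e pair (handled by corner-smoothing, or by the Poincar\'e-pair generality of \S\ref{sec:duality}) and the on-the-nose identification of $p_\ast$ with the stated combinatorial gluings; both are routine bookkeeping.
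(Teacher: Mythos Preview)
Your proposal is correct and follows essentially the same approach as the paper: apply Lemma~\ref{lem:C-plus} with $N$ replaced by $N\times I$ to obtain the $(N\times I)$-duality with indexing parameter $0$, then invoke property~\ref{ha} to push forward along $p$ and shift the indexing parameter to $1$. The paper records this argument in the sentence immediately preceding the lemma rather than in a separate proof environment; your additional remarks on corner-smoothing and the explicit identification of the pushforwards are reasonable elaborations but not points of divergence.
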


\begin{lem}  \label{lem:smash_behavior} Let $d\in \{N^+ ,U \smsh_N U^\ast\}_{R(N)}$ and 
$d'\in \{N^+, V\smsh_N V^\ast\}_{R(N)}$ be $N$-dualities. Then 
\begin{align*}
d\esmsh d' \in \quad & \{N^+\esmsh N^+ , (U \smsh_N U^\ast) \esmsh (V\smsh_N V^\ast)\}_{R(N\times N)} \\
  \cong \quad &\{(N\times N)^+ ,(U \esmsh V) 
\smsh_{N\times N} V^\ast \esmsh U^\ast\}_{R(N\times N)}
 \end{align*}
 is an $(N\times N)$-duality.
 \end{lem}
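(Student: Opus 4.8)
The plan is to deduce the lemma from the monoidality of Spanier--Whitehead duality: after passing to the equivariant model via \eqref{eqn:Quillen_equiv}, the external smash of two dualities is again a duality, by a Fubini-type argument in which the two duality isomorphisms are applied one variable at a time.

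First I would move to the equivariant picture. Choose a topological group $G$ modeling $\Omega N$, so $N\simeq BG$ and $N\times N\simeq B(G\times G)$. Under the Quillen equivalence \eqref{eqn:Quillen_equiv} (applied for $G$ and again for $G\times G$) the objects $U,U^\ast,V,V^\ast\in R(N)$ correspond to $\tilde U,\tilde U^\ast,\tilde V,\tilde V^\ast\in R^G(\ast)$, the external smash $U\esmsh V\in R(N\times N)$ corresponds to $\tilde U\smsh\tilde V\in R^{G\times G}(\ast)$ with $G\times G$ acting factorwise, the object $(N\times N)^+\cong N^+\esmsh N^+$ corresponds to $\tilde N/\partial\tilde N\smsh\tilde N/\partial\tilde N$, and $d\esmsh d'$ corresponds to the external smash $\tilde d\smsh\tilde d'$ of equivariant representatives. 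So it suffices to prove: if $\tilde d$ induces the isomorphism \eqref{eqn:formulation2} over $G$ and $\tilde d'$ induces it over $H$, then $\tilde d\smsh\tilde d'$ induces the analogous isomorphism over $G\times H$. (One could instead use Lemma \ref{lem:relation} to phrase everything in terms of genuine, Thom-twisted duality, but the direct route through \eqref{eqn:formulation2} is enough.)

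The heart of the matter is that the operation $g\mapsto(g\smsh\id)\circ(\tilde d\smsh\tilde d')$ on $\{\tilde U\smsh\tilde V,\tilde F\}_{R^{G\times H}(\ast)}$ factors, up to the evident shuffle isomorphisms, as $(-\smsh\id)\circ\tilde d'$ applied in the $H$-variable followed by $(-\smsh\id)\circ\tilde d$ applied in the $G$-variable. To make this precise I would use the function-space formulation \eqref{eqn:formulation3}: the duality condition for $\tilde d$ is a weak equivalence of stable function spaces that is natural in the target, so it applies with $\tilde F$ regarded as carrying an extra $H$-action, leaving a genuine duality problem for $\tilde d'$ in the remaining $H$-variable; each of the two steps is an equivalence by hypothesis. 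Since every object of $R^{G\times H}(\ast)$ is fibrant, there are no cofibrancy or fibrancy obstructions. This ``one variable at a time'' step is where I expect the real work to be: one must arrange the naturality carefully enough that applying the duality equivalence in one factor leaves a well-posed duality problem in the other. An alternative, at the price of importing the relevant machinery for parametrized spectra, is to quote the abstract fact that a symmetric monoidal functor (here the external smash $R(N)\times R(N)\to R(N\times N)$, stabilized) preserves dualizable objects and their duals, with $D(X\otimes Y)\simeq DX\otimes DY$.

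Finally I would translate back along \eqref{eqn:Quillen_equiv} and assemble the bookkeeping: $\tau_{N\times N}\cong\tau_N\times\tau_N$, the indexing parameters add (here $0+0=0$), the identification $(U\smsh_N U^\ast)\esmsh(V\smsh_N V^\ast)\cong(U\esmsh V)\smsh_{N\times N}(V^\ast\esmsh U^\ast)$ is built from the symmetry isomorphisms for $\esmsh$ together with $\smsh_N\cong\Delta^!(\esmsh)$, and the transposition replacing $U^\ast\esmsh V^\ast$ by $V^\ast\esmsh U^\ast$ is harmless by property (a). None of this last step is delicate, so I would not belabor it.
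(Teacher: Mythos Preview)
Your proposal is correct and follows essentially the same route as the paper: pass to the equivariant setting via \eqref{eqn:Quillen_equiv}, then verify the duality condition \eqref{eqn:formulation3} for $\tilde d\smsh\tilde d'$ by a Fubini-type chain of stable function-space adjunctions, applying the dualities for $\tilde d$ and $\tilde d'$ one variable at a time. The paper's proof is exactly this, written out as an explicit five-line chain of isomorphisms of $F^{\text{st}}(-,-)^{G\times G}$.
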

 
 \begin{proof} Using  the Quillen 
 equivalence \eqref{eqn:Quillen_equiv}, we saw in 
  \eqref{eqn:formulation1a} that representatives of both
 $d$ and $d'$ correspond to $G$-equivariant maps
 \[
 \tilde d\: \tilde N/\partial \tilde N  \to \tilde U \smsh \tilde U^\ast\, ,
 \qquad \tilde d'\: \tilde N/\partial \tilde N  \to \tilde V \smsh \tilde V^\ast \, ,
 \]
 where $G$ is a suitable topological group model for the loop space $\Omega N$.
It suffices to show that the $(G\times G)$-equivariant map
 \begin{equation} \label{eqn:tilde-d-d'}
\tilde N/\partial \tilde N \smsh \tilde N/\partial \tilde N 
 @>\tilde d \smsh \tilde d' >>  \tilde U \smsh \tilde U^\ast \smsh 
 \tilde V \smsh \tilde V^\ast \cong \tilde U \smsh \tilde V \smsh 
 \tilde V^\ast \smsh \tilde U^\ast
 \end{equation}
 satisfies the equivariant duality condition \eqref{eqn:formulation3}
 with respect to the group $G\times G$. Here we are identifying the domain of 
 \eqref{eqn:tilde-d-d'}
 with $\widetilde{N \times N}/\partial (\widetilde{N\times N})$.
 
 Let $\tilde E \in R^{G\times G}(\ast)$ be a cofibrant object. Then
 \begin{align*}
F^{\text{st}}(\tilde U \smsh \tilde V, \tilde  E)^{G\times G}
 & \cong
F^{\text{st}}(\tilde U , F^{\text{st}}(\tilde V, \tilde  E)^{1\times G})^{G\times 1}\, , \\
  & \simeq 
 F^{\text{st}}(\tilde U , F^{\text{st}}(\tilde N/\partial \tilde N, \tilde  E\smsh \tilde V^\ast)^{1\times G})^{G\times 1} \, , \\
 & \cong
F^{\text{st}}(\tilde N/\partial \tilde N , F^{\text{st}}(\tilde U, \tilde  E\smsh \tilde V^\ast)^{G\times 1})^{1\times G} \, , \\
   & \simeq
 F^{\text{st}}(\tilde N/\partial \tilde N , F^{\text{st}}(\tilde N/\partial \tilde N, \tilde  E\smsh \tilde V^\ast \smsh \tilde U^\ast)^{G\times 1})^{1\times G} \, , \\
    & \cong 
 F^{\text{st}}(\tilde N/\partial \tilde N \smsh  \tilde N/\partial \tilde N, 
    \tilde  E\smsh \tilde V^\ast \smsh \tilde U^\ast)^{G\times G}\, ,
    \end{align*}
where the three isomorphisms listed above are given by the evident adjunctions, and
the second and fourth lines are deduced  by the duality condition \eqref{eqn:formulation3}.
 It is not difficult to check that the composite of these identifications
 is induced by the operation arising from the map \eqref{eqn:tilde-d-d'}.

 \end{proof}

Let $\tau_N$ denote the tangent bundle of $N$.
Let $\Delta\: N \to N\times N$ be the diagonal map.
Recall that $\Delta_\ast\: R(N) \to R(N\times N)$ is the functor
given by $Y \mapsto Y \cup_\Delta (N\times N)$.

\begin{lem}\label{lem:induced_dual} Suppose that $d\in \{N^+ ,U\smsh_N U^\ast\}_{R(N)}$
is an $N$-duality. Then there is preferred $(N\times N)$-duality
\[
\hat d\in \{(N\times N)^+, \Delta_\ast \Sigma^{\tau}_N U \smsh_{N\times N} \Delta_\ast U^\ast\}_{R(N\times N)} \, .
\]

\end{lem}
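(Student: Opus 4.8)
The plan is to obtain $\hat d$ by combining two facts: that the pushforward functor $\Delta_*\colon R(N)\to R(N\times N)$ carries $N$-dualities to $(N\times N)$-dualities, and the fiberwise Atiyah duality of the diagonal embedding, which is precisely what introduces the Thom twist by $\tau_N$. Throughout I work in the derived sense (fibrant/cofibrant replacements understood), and I write $\Delta^!$ for pullback along $\Delta$, right adjoint to $\Delta_*$ as in \S\ref{sec:prelim}.

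\textbf{Step 1: $\Delta_*$ preserves duality.} I would first record two formal properties. Since $\Delta$ is an embedding, the restriction of $\Delta_*Y=Y\cup_\Delta(N\times N)$ to the diagonal recovers $Y$, so the canonical map $\Delta^!\Delta_*Y\to Y$ is a weak equivalence; and there is a projection formula $(\Delta_*X)\smsh_{N\times N}Z\simeq\Delta_*(X\smsh_N\Delta^!Z)$, which for $Z=\Delta_*Y$ gives in particular the comparison $(\Delta_*X)\smsh_{N\times N}(\Delta_*Y)\simeq\Delta_*(X\smsh_N Y)$. Granting these, if $d'\colon A\to(B\smsh_N C)^f$ is an $N$-duality map, then for any $F\in R(N\times N)$ the composite
\[
\{\Delta_*B,F\}_{R(N\times N)}\cong\{B,\Delta^!F\}_{R(N)}\cong\{A,\Delta^!F\smsh_N C\}_{R(N)}\cong\{A,\Delta^!(F\smsh_{N\times N}\Delta_*C)\}_{R(N)}\cong\{\Delta_*A,F\smsh_{N\times N}\Delta_*C\}_{R(N\times N)}
\]
--- the $(\Delta_*,\Delta^!)$-adjunction, then the $d'$-duality, then the projection formula together with $\Delta^!\Delta_*\simeq\mathrm{id}$, then the adjunction again --- is induced by $\Delta_*d'$, so $\Delta_*d'$ is an $(N\times N)$-duality (with parameter object $\Delta_*A$). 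Applying this to $\Sigma^{\tau_N}_Nd\colon\Sigma^{\tau_N}_NN^+\to(\Sigma^{\tau_N}_NU\smsh_NU^*)^f$, which is an $N$-duality since it is $d$ smashed with $S^{\tau_N}_N$, yields an $(N\times N)$-duality exhibiting $\Delta_*\Sigma^{\tau_N}_NU$ as $(N\times N)$-dual to $\Delta_*U^*$ --- but with parameter object $\Delta_*\Sigma^{\tau_N}_NN^+$ rather than $(N\times N)^+$.

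\textbf{Step 2: Atiyah duality for the diagonal.} The diagonal $\Delta\colon N\hookrightarrow N\times N$ is a codimension-$n$ embedding whose normal bundle is $\tau_N$; a tubular neighborhood gives a codimension-zero decomposition of $N\times N$ to which \eqref{eqn:N-dual-from-embedding} applies (after the standard care near $\partial N$, using the relative form and the Poincar\'e pair $(N,\partial N)$). Since the disk bundle $D(\nu_\Delta)$ deformation retracts onto $\Delta(N)$, the two resulting factors are identified with $\Delta_*N^+$ and $\Delta_*\Sigma^{\tau_N}_NN^+$, so \eqref{eqn:N-dual-from-embedding} produces an $(N\times N)$-duality exhibiting $\Delta_*N^+$ as $(N\times N)$-dual to $\Delta_*\Sigma^{\tau_N}_NN^+$ with parameter $0$; equivalently this is the fiberwise Atiyah duality of \cite{Klein_immersion},\cite{Klein_dualizing} applied to $\Delta$. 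By symmetry it also says $\Delta_*\Sigma^{\tau_N}_NN^+$ is $(N\times N)$-dual to $\Delta_*N^+$ with parameter $0$.

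\textbf{Step 3: assembling $\hat d$.} Splicing the duality of Step 2 into that of Step 1 by the umkehr correspondence \eqref{eqn:umkehr}: for $F\in R(N\times N)$,
\[
\{\Delta_*\Sigma^{\tau_N}_NU,F\}_{R(N\times N)}\cong\{\Delta_*\Sigma^{\tau_N}_NN^+,F\smsh_{N\times N}\Delta_*U^*\}_{R(N\times N)}\cong\{(N\times N)^+,F\smsh_{N\times N}\Delta_*U^*\smsh_{N\times N}\Delta_*N^+\}_{R(N\times N)},
\]
and $\Delta_*U^*\smsh_{N\times N}\Delta_*N^+\simeq\Delta_*(U^*\smsh_N N^+)\simeq\Delta_*U^*$ by Step 1's comparison (using $N^+\smsh_N(-)\simeq\mathrm{id}$, unconditionally when $N$ is closed, and relative to $\partial N$ in general). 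Thus $\Delta_*\Sigma^{\tau_N}_NU$ is $(N\times N)$-dual to $\Delta_*U^*$ with parameter $0$; the associated class is the desired $\hat d\in\{(N\times N)^+,\Delta_*\Sigma^{\tau_N}_NU\smsh_{N\times N}\Delta_*U^*\}_{R(N\times N)}$, and it is preferred because it is the composite of $\Delta_*\Sigma^{\tau_N}_Nd$ with the canonical (transposed) diagonal-duality map of Step 2. The main obstacle is Step 1: verifying $\Delta^!\Delta_*\simeq\mathrm{id}$, the projection formula, and the smash comparison after deriving, and checking that the displayed chain of isomorphisms is genuinely the operation induced by $\Delta_*d'$; making Step 2 precise near $\partial N$ is the other delicate point. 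An alternative carrying the same content is to pass through the Quillen equivalence \eqref{eqn:Quillen_equiv} with $G=\Omega N$, where $\Delta_*$ becomes induction along $\Delta G\hookrightarrow G\times G$ and the $\tau_N$-twist is the Wirthm\"uller defect, identified with the dualizing $G$-spectrum $S^{-\tau}$ of \cite{Klein_dualizing}.
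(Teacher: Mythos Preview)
Your Step 1 contains a genuine error: the assertion $\Delta^!\Delta_*Y\simeq Y$ is false in the derived sense whenever $N$ is not contractible. Although the \emph{underived} pullback of $\Delta_*Y=Y\cup_N(N\times N)$ along the diagonal recovers $Y$ on the nose, the derived $\Delta^!$ requires first replacing $\Delta_*Y\to N\times N$ by a fibration, and the homotopy fiber of this map over a diagonal point $(n,n)$ is not the fiber $F_n$ of $Y\to N$ but rather $F_n\wedge(\Omega N)_+$. (This is exactly the computation carried out later in the paper in the proof of Corollary \ref{cor:fringe_dim}; in the equivariant picture, restricting the induced module $\tilde Y\wedge_G(G\times G)_+$ along the diagonal $G\hookrightarrow G\times G$ gives $\tilde Y\wedge G^{\mathrm{ad}}_+$, not $\tilde Y$.) Consequently your smash comparison $(\Delta_*X)\wedge_{N\times N}(\Delta_*Y)\simeq\Delta_*(X\wedge_NY)$ fails, the third isomorphism in your displayed chain is invalid, and the same defect recurs in Step 3 when you attempt to simplify $\Delta_*U^*\wedge_{N\times N}\Delta_*N^+$. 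The decomposition into ``$\Delta_*$ preserves duality with the same parameter'' followed by a separate Atiyah correction does not work as written: the $\tau_N$-twist is intrinsic to how $\Delta_*$ interacts with duality, not something one can splice in afterward via the umkehr correspondence. (Note also that since $\Delta_*$ is \emph{left} adjoint to $\Delta^!$, the unit goes $Y\to\Delta^!\Delta_*Y$, not the direction you wrote.)

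The paper's proof is essentially the ``alternative'' you sketch in your last sentence. It passes immediately to the equivariant side via \eqref{eqn:Quillen_equiv} and Lemma \ref{lem:relation}, where $\Delta_*$ becomes induction $(-)\wedge_G(G\times G)_+$ along $G\hookrightarrow G\times G$, and then runs a short chain of isomorphisms using only the induction--restriction adjunction and the given equivariant duality between $\Sigma^{-\tau}\tilde U$ and $\tilde U^*$ (the $-\tau$ appearing because Lemma \ref{lem:relation} introduces a twist by $-\tau_{N\times N}$). The crucial identification there, $E\wedge_G U^*\cong E\wedge_{G\times G}(U^*\wedge_G(G\times G)_+)$, is just associativity of taking orbits and does not invoke anything like $\Delta^!\Delta_*\simeq\mathrm{id}$.
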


\begin{proof} Fix a universal principal bundle over $N$ with
structure group $G$. By Lemma \ref{lem:relation} and the Quillen equivalence
\eqref{eqn:Quillen_equiv}, it is enough to find
a  $(G\times G)$-equivariant duality map
\[
S^0 \to (\Sigma^{-\tau} U\smsh_{G} (G\times G)_+) \smsh_{G\times G}  (U^\ast 
\smsh_{G} (G\times G)_+)  
\]
(in the above display, 
$-\tau$ appears instead of $\tau$ since  Lemma \ref{lem:relation} 
involves twisted suspension by $-\tau_{N\times N}$).
Let $E \in R^{G\times G}(\ast)$ be an object. In what follows,
$E$ will also be considered as an object of $R^G(\ast)$ by means of 
restriction along the diagonal $G\to G\times G$. 
Then we have a chain of isomorphisms
\begin{align*}
\{(\Sigma^{-\tau} U)\smsh_{G} (G\times G)_+,E\}_{R^{G\times G}(\ast)}
&\cong \{\Sigma^{-\tau} U,E\}_{R^{G}(\ast)}\, ,\\
& \cong \{S^0,E \smsh_{G} U^\ast\}_{R(\ast)}\, ,\\
& \cong \{S^0,E \smsh_{G\times G} (U^\ast\smsh_G (G\times G)_+)\}_{R(\ast)}\, ,
\end{align*} 
in which the first and last of these is given by extension of scalars and the 
middle one arises from the equivariant duality between $\Sigma^{-\tau} U$ and $U^\ast$. 

Specializing to $E = (\Sigma^{-\tau} U)\smsh_{G} (G\times G)_+$, 
we obtain an isomorphism of abelian groups
{\small
\begin{align*}
\{(\Sigma^{-\tau} U)\smsh_{G} (G\times G)_+,& (\Sigma^{-\tau} U)\smsh_{G} (G\times G)_+\}_{R^{G\times G}(\ast)}  \\
& \cong 
\{S^0,((\Sigma^{-\tau} U)\smsh_{G} (G\times G)_+) \smsh_{G\times G} (U^\ast\smsh_G (G\times G)_+)\}\, .
\end{align*}
}
Since the left side contains a preferred element
given by the identity map of $(\Sigma^{-\tau} U)\smsh_{G} (G\times G)_+$,
it follows that the abelian group on the right possesses a preferred element as well. Represent this preferred
element as a stable map
\[
S^0 \to((\Sigma^{-\tau} U)\smsh_{G} (G\times G)_+) \smsh_{G\times G} 
(U^\ast\smsh_G (G\times G)_+)\, .
\]
Then it is straightforward to check that the latter
 satisfies the equivariant duality condition.
\end{proof}

\begin{cor} \label{prop:C-plus-dual} The object $\Delta_\ast(W^+) :=
 W \cup_{\partial (N\times I)} (N\times N)$ is $(N\times N)$-dual to 
\[
\Delta_\ast(\Sigma_N^\tau i_+S_N K)
\]
 with indexing parameter $j = 1$.
\end{cor}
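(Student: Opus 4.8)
The plan is to obtain the corollary by feeding the $N$-duality supplied by Lemma~\ref{lem:w-plus} into Lemma~\ref{lem:induced_dual}. No genuinely new argument is required; the only delicate point is the bookkeeping of the indexing parameter, since Lemma~\ref{lem:induced_dual} is stated for parameter $0$ while Lemma~\ref{lem:w-plus} produces parameter $1$.

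First I would record, via Lemma~\ref{lem:w-plus}, that $W^+ = W\cup_{\partial(N\times I)}N \in R(N)$ is $N$-dual to $i_+S_NK$ with indexing parameter $j = 1$; here both objects are homotopy finite ($W$ is a compact manifold and $K$ a finite complex), hence finitely dominated, so Definition~\ref{defn:duality_defn} applies. By the symmetry of $N$-duality (property~(a) above, coming from the twist map) this is the same as saying that $i_+S_NK$ is $N$-dual to $W^+$ with indexing parameter $1$, so I may fix an $N$-duality map
\[
d\colon \Sigma_N N^+ \longrightarrow (i_+S_NK \smsh_N W^+)^f\, .
\]

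Next I would apply Lemma~\ref{lem:induced_dual} to $d$, with $U = i_+S_NK$ and $U^\ast = W^+$. Although that lemma is stated only for parameter $0$, its proof is parameter-agnostic: after fixing a universal $G$-bundle over $N$ and passing through Lemma~\ref{lem:relation} and the Quillen equivalence~\eqref{eqn:Quillen_equiv}, the preferred $(G\times G)$-equivariant duality map is produced by specializing $E$ to $(\Sigma^{-\tau}U)\smsh_G(G\times G)_+$ and transporting the identity map through the displayed chain of adjunction and duality isomorphisms, and the whole argument goes through verbatim with $S^1$ in place of $S^0$ throughout, so that by Lemma~\ref{lem:relation} the resulting $(N\times N)$-duality again has indexing parameter $1$. (Alternatively, one can reinterpret the parameter-$1$ duality of Lemma~\ref{lem:w-plus} as a parameter-$0$ duality between $W^+$ and the fiberwise spectrum $\Sigma_N^{-\epsilon}i_+S_NK$, apply Lemma~\ref{lem:induced_dual} exactly as stated, and re-absorb the $-\epsilon$ at the end.) Either way this yields a preferred $(N\times N)$-duality
\[
\hat d \in \{\Sigma_{N\times N}(N\times N)^+,\ \Delta_\ast\Sigma_N^{\tau}i_+S_NK \smsh_{N\times N} \Delta_\ast W^+\}_{R(N\times N)}\, ,
\]
and invoking symmetry once more to interchange the two smash factors gives exactly that $\Delta_\ast(W^+) = W\cup_{\partial(N\times I)}(N\times N)$ is $(N\times N)$-dual to $\Delta_\ast(\Sigma_N^{\tau}i_+S_NK)$ with indexing parameter $1$.

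I expect the only real obstacle to be the indexing-parameter bookkeeping just outlined: one must check that the construction in Lemma~\ref{lem:induced_dual} does not secretly use $j=0$ (equivalently, be comfortable working with the fiberwise spectrum $\Sigma_N^{-\epsilon}i_+S_NK$), and keep straight that the shift $p_\ast(N\times I)^+ \cong \Sigma_N N^+$ built into Lemma~\ref{lem:w-plus} is precisely what makes the parameter here equal to $1$ and the twist appear as $\tau$ rather than $\tau-\epsilon$. Matching the roles of $U$ and $U^\ast$, verifying the finiteness hypotheses, and the appeals to symmetry are all routine.
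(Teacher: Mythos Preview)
Your proposal is correct and follows exactly the paper's approach: the paper's proof is the single sentence ``This follows from Lemma~\ref{lem:w-plus} and Lemma~\ref{lem:induced_dual},'' and you combine precisely those two inputs with the correct assignment $U = i_+S_NK$, $U^\ast = W^+$. Your extra care with the indexing-parameter bookkeeping (noting that Lemma~\ref{lem:induced_dual} is phrased for $j=0$ but its proof carries over verbatim, or equivalently absorbing the shift into $\Sigma_N^{-\epsilon}$) is a detail the paper leaves implicit.
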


\begin{proof} This follows from
Lemma \ref{lem:w-plus} and  Lemma \ref{lem:induced_dual}.
\end{proof}

\section{Proof of  Theorem \ref{bigthm:compression} \label{sec:compress}}
\begin{proof}[Proof of Theorem \ref{bigthm:compression}]
Suppose that $(U,h)$ is an $h$-embedding of $f_1\: K \to N \times D^1$,
 where $N$ is a
compact manifold with boundary $\partial N$.
Let $W \in T(N \times S^0 \to N)$ denote the complement. If $(U,h)$ compresses into $N$ then 
there is an object $C\in T(\partial N \to N)$ and a weak equivalence
$W \simeq S_N C$  of  $T(S_N\partial N \to N)$.

The main result of the second author's Ph.~D. thesis says that the converse is in fact
 true provided that 
 \[
 2r \ge 2k-n+3
 \]
  $k\le n-3$ and $n \ge 6$, where $f\: K \to N$ is  $r$-connected and 
$K$ has the homotopy type of a CW complex of dimension $\le k$ (cf.\ \cite{JP-thesis}). 
Hence, to compress $(U,h)$ into 
$N$ we only need to find a fiberwise desuspension of $W$.

Using Poincar\'e duality, a cohomology computation  shows that $W$ is an $(n-k-1)$-connected object having
dimension $\dim W \le n-r$ (cf.\  \cite[p.~615]{Klein_haef}). Applying Theorem \ref{bigthm:main}, we see that $W$ fiberwise
desuspends if and only the diagonal obstruction
\[
[\tilde \Delta] \in \{W^+,i_+W\smsh_N i_-W\}_{R(N)}
\]
vanishes. Here we are assuming in  addition that $n -r \le 3(n-k-1) $, i.e, 
$r \ge 3k-2n+3$.

Using the adjunction property, there is an isomorphism of abelian groups
\begin{equation} \label{eqn:pushforward}
\{W^+,i_+W\smsh_N i_-W\}_{R(N)} \cong \{\Delta_\ast (W^+),i_+W\esmsh i_-W\}_{R(N\times N)} 
\end{equation}

The idea now is to apply the umkehr correspondence 
\eqref{eqn:umkehr} to $[\tilde \Delta]$.
By Corollary \ref{prop:C-plus-dual}, $\Delta_\ast(W^+)$ is $(N\times N)$-dual to 
$\Delta_\ast(\Sigma^{\tau_N}_N i_+S_N K)$ with indexing  parameter $j = 1$. By Lemma \ref{lem:K-plus},
$K^+$ is $N$-dual to $i_+S_NW$  with indexing parameter $j=1$. 
As in the proof of Lemma \ref{lem:pm}, $i_+S_NW$ can be identified with $\Sigma_N i_+ W$.
Consequently, $K^+$ is $N$-dual to $i_+W$ with indexing parameter $j=0$. 
By Lemma \ref{lem:pm}, $\Sigma_N i_- W \simeq \Sigma_N i_+ W $, so $K^+$ is also $N$-dual to $i_- W$. By Lemma \ref{lem:smash_behavior}),
we see that $i_- W \esmsh i_+ W$ is $(N\times N)$-dual to $K^+ \esmsh K^+ \cong (K \times K)^+$ with indexing parameter $j=0$. Applying the umkehr correspondence and the isomorphism \eqref{eqn:pushforward} we obtain an isomorphism of abelian groups
\begin{equation} \label{eqn:last-iso}
\{W^+,i_+W\smsh_N i_-W\}_{R(N)} \cong \{(K \times K)^+,\Delta_\ast(\Sigma^{\tau_N-\epsilon}_Ni_+S_N K)\}_{R(N\times N)} \, .
\end{equation}
We can therefore take 
\[
\theta(U,h) \in \{(K \times K)^+,\Delta_\ast(\Sigma_N^{\tau_N-\epsilon} i_+S_N K)\}_{R(N\times N)} 
\]
to be the unique element  that corresponds to $[\tilde \Delta]$ with respect to the isomorphism \eqref{eqn:last-iso}.
\end{proof}
\section{Identification of the critical group}  

Suppose that $f\: K \to N$ is $(2k-n)$-connected. We will identify the obstruction group
\[
\{(K\times K)^+,\Delta_\ast(\Sigma_N^{\tau_N-\epsilon} i_+S_N K)\}_{R(N\times N)} 
\]
in terms of singular cohomology.

The following result follows from classical obstruction theory. We omit the proof.

\begin{lem} \label{lem:obstruction_theory}Let $X,Y \in R(B)$ be objects. Suppose $Y$ is an $j$-connected object and 
$\dim X \le j+1$. Let $r_Y \: Y \to B$ denote the structure map. Then
\[
\{X,Y\}_{R(B)} \cong H^{j+1}(X,B;\pi_{j+1}(r_Y))\, ,
\]
where  the cofficients are twisted with respect to the $\pi_1(B)$-module 
$\pi_{j+1}(r_Y)$.
\end{lem}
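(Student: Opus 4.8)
The plan is to reduce to the equivariant setting and then run the usual obstruction-theoretic argument. First I would invoke the Quillen equivalence \eqref{eqn:Quillen_equiv} to replace $R(B)$ by $R^G(\ast)$ where $G$ is a topological group model for $\Omega B$ (so $B \simeq BG$). Under this equivalence $X$ and $Y$ correspond to based $G$-spaces $\tilde X, \tilde Y$, with $\tilde Y$ being $j$-connected and $\tilde X$ built from $S^0$ by attaching free $G$-cells of (nonequivariant) dimension $\le j+1$. The quotient $\tilde X / (\text{basepoint orbit})$ — which I will just call the relative complex $(\tilde X, \ast)$ — is then a free $G$-CW complex with cells only in dimensions $\le j+1$, and $\{X,Y\}_{R(B)} \cong \{\tilde X,\tilde Y\}_{R^G(\ast)}$.

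Next I would identify this equivariant stable homotopy set with a Bredon-style cohomology group and then with ordinary twisted cohomology. Since $\tilde Y$ is $(j)$-connected, stabilization is harmless in the relevant range: a stable equivariant map $\Sigma^m \tilde X \to \Sigma^m \tilde Y$ is, up to homotopy and because $\dim \tilde X \le j+1 \le \operatorname{conn}(\tilde Y)+1$, already unstable (this is the metastable/stable range where $[\, \cdot\, ] \to \{\, \cdot\, \}$ is iso, exactly as used in the proof of Lemma \ref{lem:diagonal=coH}). So it suffices to compute $[\tilde X, \tilde Y^{(j+1)}]_{R^G(\ast)}$ where $\tilde Y^{(j+1)}$ is a Postnikov-style approximation with a single nonvanishing homotopy group $\pi_{j+1}$ in the relevant dimension. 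Because $\tilde X$ is a free $G$-complex of dimension $\le j+1$, obstruction theory with respect to the $j$-connected target $\tilde Y$ says the only obstruction to (and the only indeterminacy in) extending the constant map lives in the cellular cochain group $C^{j+1}_G(\tilde X; \pi_{j+1}(\tilde Y))$, and homotopy classes of maps are classified by $H^{j+1}_G(\tilde X; \pi_{j+1}(\tilde Y))$ — Bredon cohomology, which for a free action with coefficients in the $\pi_1(BG)=\pi_0(G)$... more precisely the $\pi_1(B)$-module $\pi_{j+1}$ is exactly ordinary cohomology with local coefficients. Translating back through the Quillen equivalence, $\pi_{j+1}(\tilde Y)$ with its $\pi_0(G)$-action corresponds to $\pi_{j+1}(r_Y)$ with its $\pi_1(B)$-action, and $H^{j+1}_G(\tilde X;-)$ corresponds to $H^{j+1}(X,B;-)$ (the relative group, since the basepoint section $B \to X$ is what gets quotiented out). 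This yields the stated isomorphism.

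The main obstacle I expect is bookkeeping rather than anything deep: making precise the claim that "$Y$ is $j$-connected and $\dim X \le j+1$" puts us in a range where $[X,Y]_{R(B)} \to \{X,Y\}_{R(B)}$ is a bijection \emph{and} where the single-obstruction-group description of homotopy classes is valid, and checking that the local coefficient system one gets on the nose from the equivariant cellular cochain complex is the right one (twisting by $\pi_{j+1}(r_Y)$ as a $\pi_1(B)$-module, not by some shifted or dualized version). Since the paper explicitly says "This follows from classical obstruction theory. We omit the proof," I would keep the write-up correspondingly brief, citing the Quillen equivalence \eqref{eqn:Quillen_equiv} and standard obstruction theory, and only flag the connectivity/dimension hypothesis as the reason the answer is a \emph{single} cohomology group with no higher contributions or indeterminacy.
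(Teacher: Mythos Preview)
Your proposal is correct. The paper gives no proof at all---it simply asserts that the lemma ``follows from classical obstruction theory'' and omits the argument---so there is nothing substantive to compare against. Your route via the Quillen equivalence \eqref{eqn:Quillen_equiv} to $R^G(\ast)$ followed by equivariant obstruction theory is one clean way to make the statement precise; an equally standard alternative (and perhaps closer to what the authors have in mind by ``classical obstruction theory'') is to stay in the fiberwise setting: replace $Y$ by its fiberwise $(j{+}1)$-st Postnikov section, which is a fiberwise $K(\pi_{j+1}(r_Y),j{+}1)$ over $B$, observe that the comparison map is a $(j{+}2)$-equivalence of objects in $R(B)$, and use $\dim X \le j{+}1$ to conclude that $[X,Y]_{R(B)}$ (and hence, by the same dimension/connectivity count after suspending, $\{X,Y\}_{R(B)}$) is computed as maps into a fiberwise Eilenberg--MacLane object, i.e., as $H^{j+1}(X,B;\pi_{j+1}(r_Y))$ with the indicated local coefficients. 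Either way the content is the same single-obstruction identification you describe, and your caveats about the stable-versus-unstable bookkeeping and the identification of the coefficient module are the right points to flag.
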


Recall that $p\: \tilde N \to N$ is a choice of universal
principal bundle with structure group $G$. If we set  $\Gamma = \pi_0(G)$,
then $\Gamma$ is identified with the fundamental group of $N$. Let
$w\: \Gamma \to \Bbb Z/2$ be the homomorphism given by the first
Stiefel-Whitney class of the tangent bundle $\tau$ of $N$ (this
uses the identification $\hom(\Gamma,\Bbb Z_2) \cong H^1(N;\Bbb Z_2)$).
Define a $(\Gamma\times \Gamma)$-module structure on 
$\pi_{2k-n+1}(f) \otimes \Bbb Z[\Gamma]$ as follows: if $(g_1,g_2) \in G\times G$, $x \in \pi_{2k-n+1}(f)$ and $y \in \Bbb Z[\Gamma]$,
 then 
 \[
 (g_1,g_2) \cdot (x,y) := w(g_1)g_1x\otimes g_2yg_1^{-1}
 \] (compare
 \cite{Habegger}).

\begin{cor} \label{cor:fringe_dim} Assume $f\: K \to N$ is $(2k-n)$-connected.
Then there is an isomorphism of abelian groups
\[
\{(K\times K)^+,\Delta_\ast(\Sigma_N^{\tau_N-\epsilon} i_+S_N K)\}_{R(N\times N)}  
\cong
H^{2k}(K\times K;\pi_{2k-n+1}(f) \otimes \Bbb Z[\Gamma] )\, ,
\]
where coefficients are twisted by the $(\pi_1(K)\times \pi_1(K))$-module structure on $\pi_{2k-n+1}(f) \otimes \Bbb Z[G]$ that is induced by the pullback along the homomorphism 
$f_\ast\times f_\ast\: \pi_1(K)\times \pi_1(K) \to 
\Gamma\times \Gamma$.
\end{cor}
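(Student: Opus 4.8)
The plan is to combine the obstruction-theory computation of Lemma \ref{lem:obstruction_theory} with the Quillen equivalence \eqref{eqn:Quillen_equiv}, so that the fiberwise stable homotopy group on the left is rewritten first as a $G\times G$-equivariant stable cohomotopy group and then as ordinary (twisted) singular cohomology of $K \times K$. First I would record the relevant connectivity and dimension data: since $f\: K \to N$ is $(2k-n)$-connected, the fiberwise suspension $i_+S_N K \in R(N)$ is $(2k-n)$-connected, hence $\Sigma_N^{\tau_N-\epsilon} i_+S_N K$ is $(2k-n + (n-1)) = (2k-1)$-connected; that is, its structure map to $N\times N$ (via $\Delta_\ast$) is $2k$-connected. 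On the other side, $(K\times K)^+$ has dimension $2k$ over $N\times N$. Thus Lemma \ref{lem:obstruction_theory} applies with $j = 2k-1$, and identifies the group with
\[
H^{2k}\bigl((K\times K)^+, N\times N;\ \pi_{2k}(r_Y)\bigr) \cong H^{2k}\bigl(K\times K;\ \pi_{2k}(r_Y)\bigr),
\]
where $r_Y$ is the structure map of $Y = \Delta_\ast(\Sigma_N^{\tau_N-\epsilon} i_+S_N K)$ and the second identification uses that the pair $((K\times K)^+, N\times N)$ has relative cohomology that of $K\times K$ with its (twisted) coefficients, $N\times N$ being the "basepoint section."

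The heart of the argument is then the identification of the coefficient module $\pi_{2k}(r_Y)$ with $\pi_{2k-n+1}(f)\otimes \Bbb Z[\Gamma]$, together with its $(\Gamma\times\Gamma)$-action. To do this I would pass through the Quillen equivalence: $\Sigma_N^{\tau_N-\epsilon} i_+S_N K$ corresponds to an object of $R^G(\ast)$, and unwinding $\Delta_\ast$ (which corresponds to induction/extension of scalars along the diagonal $G \to G\times G$) shows that the fiber of $r_Y$ over a point of $N\times N$ is, up to the relevant connectivity, the homotopy fiber of $\Sigma^{\tau-\epsilon}(\text{fiberwise suspension of } K\to N)$ smashed with $G\times G/G \simeq G$ — which accounts for the $\otimes \Bbb Z[\Gamma]$ factor. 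The $\tau_N - \epsilon$ twist shifts the relevant homotopy group from $\pi_{2k-n+1}(f)$-land: since $i_+S_N K$ has fiber the unreduced suspension of the homotopy fiber of $f$, which in the $(2k-n)$-connected range contributes $\pi_{2k-n+1}(f)$ in its first nonvanishing reduced homology/homotopy, suspending by $\tau_N - \epsilon$ (rank $n-1$) lands this in degree $2k-n+1 + n - 1 = 2k$. The Stiefel–Whitney twist $w\: \Gamma \to \Bbb Z/2$ enters precisely because the bundle suspension is by $\tau_N$, whose orientation character is $w$; this is exactly parallel to the twisting in Habegger's formula \cite{Habegger}, which is why the paper points there for comparison.

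The concrete steps, in order: (i) compute connectivity of $\Sigma_N^{\tau_N - \epsilon} i_+ S_N K$ and dimension of $(K\times K)^+$ over $N\times N$; (ii) invoke Lemma \ref{lem:obstruction_theory} to get twisted cohomology of $K\times K$ in degree $2k$; (iii) use the Quillen equivalence \eqref{eqn:Quillen_equiv} to translate the coefficient local system into an equivariant one over $G\times G$; (iv) identify the underlying abelian group of coefficients as $\pi_{2k-n+1}(f)\otimes \Bbb Z[\Gamma]$ via the Hurewicz/Freudenthal identification of the first nonvanishing group of the $\tau_N$-twisted suspension of the homotopy fiber of $f$, using $\pi_0(G) = \Gamma$ for the induced-module factor; (v) track the $(\Gamma\times\Gamma)$-action through $\Delta_\ast$ and the twist, obtaining the formula $(g_1,g_2)\cdot(x,y) = w(g_1)g_1 x \otimes g_2 y g_1^{-1}$; (vi) pull back along $f_\ast \times f_\ast$ to express everything over $\pi_1(K)\times\pi_1(K)$. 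I expect step (v)—correctly bookkeeping the conjugation action $g_2 y g_1^{-1}$ coming from the diagonal induction together with the $w(g_1)g_1$ factor from the tangent-bundle twist—to be the main obstacle, since it requires being careful about which copy of $G$ acts on which tensor factor and about the sign/orientation contribution; the rest is a routine assembly of Lemma \ref{lem:obstruction_theory}, the Quillen equivalence, and a Freudenthal-range Hurewicz computation.
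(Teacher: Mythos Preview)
Your proposal is correct and follows essentially the same route as the paper's own proof: both verify the connectivity/dimension hypotheses needed for Lemma \ref{lem:obstruction_theory}, then identify the coefficient module $\pi_{2k}(r_{\Delta_\ast Y})$ by passing through the Quillen equivalence \eqref{eqn:Quillen_equiv}. What you describe as ``induction along the diagonal $G\to G\times G$'' is exactly what the paper carries out concretely by writing $\Delta_\ast Y$ as a homotopy colimit and identifying the homotopy fiber of $r_{\Delta_\ast Y}$ with $\tilde Y \smsh G^{\mathrm{ad}}_+$; your steps (iv)--(v) then amount to reading off $\pi_{2k}(\tilde Y \smsh G^{\mathrm{ad}}_+)\cong \pi_{2k-n+1}(f)\otimes\Bbb Z[\Gamma]$ with the stated action, which the paper leaves as ``straightforward to check.''
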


\begin{proof} 
Set $Y = \Sigma_N^{\tau_N-\epsilon} i_+S_N K$. Then 
\[
\Delta_\ast Y = Y \cup_\Delta (N \times N) \simeq \hocolim (Y @<<< N @> \Delta >> N \times N) \, .
\]
As  homotopy pullbacks over the
same base commute with (homotopy) colimits, the pullback to
$\Delta_\ast Y$ of the universal bundle $p\times p\: \tilde N\times \tilde N \to N \times N$ may be identified up to homotopy with the based $(G\times G)$-space
\begin{equation} \label{eqn:identification}
\colim (\tilde Y \times G^{\text{ad}} @<<< \ast \times G^{\text{ad}} @>>> \ast)
\simeq \tilde Y \smsh G^{\text{ad}}_+
\end{equation}
where 
\begin{itemize}
\item $\tilde Y := Y\times_N \tilde N$ is the fiber product of $Y$ and
 $\tilde N$ 
over $N$. This comes equipped with the structure of a based $G$-space. We make it into a based
$(G\times G)$-space by letting the left factor
of $G\times G$ act with the given $G$-action and declaring that the
right factor act trivially.
\item The space $G^{\text{ad}}$ is a copy of
 $G$ with  $(G\times G)$-action  $(g,h)\cdot x := gxh^{-1}$.
\item We give $\tilde Y \smsh G^{\text{ad}}_+$ the diagonal $(G\times G)$-action.
\end{itemize}
Note that the underlying space of  $\tilde Y \smsh G^{\text{ad}}_+$
is identified  with the homotopy fiber of the structure map
$r_{\Delta_\ast Y} \: \Delta_\ast Y \to N\times N$.

In particular, since $\tilde Y$ is $(2k-2)$-connected, we see that
$\tilde Y \smsh G^{\text{ad}}_+$ is
$(2k-2)$-connected, i.e., $r_{\Delta_\ast Y}$ is a
$(2k-1)$-connected map. 
By Lemma \ref{lem:obstruction_theory}, it follows that
\[
\{(K\times K)^+,\Delta_\ast Y\}_{R(N\times N)}  
\cong
H^{2k}(K\times K;\pi_{2k}(r_{\Delta_\ast Y}))\, .
\]
It suffices to show that $\pi_{2k}(r_{\Delta_\ast Y})$ is isomorphic
to $\pi_{2k-n+1}(f) \otimes \Bbb Z[\Gamma]$ as $(\Gamma\times \Gamma)$-modules. 
But this is straightforward to check using the identification
\eqref{eqn:identification}.
\end{proof}

\begin{rem}\label{rem:Habegger} If $f\: K \to N$ is $(2k-n)$-connected, $2k-n \ge 2$ and
$k \le n-3$, then Habegger exhibits
a necessary and sufficient obstruction to finding an $h$-embedding
of $f$ in $N$ \cite{Habegger}. The obstruction lies in the group
\[
H^{2k}(K\times K;\pi_{2k-n+1}(f) \otimes \Bbb Z[\Gamma] )_{\Bbb Z_2}
\]
which is the coinvariants of a certain involution of the obstruction group appearing in Corollary \ref{cor:fringe_dim}. In comparing his 
result with our Theorem \ref{bigthm:compression}, note  that Habegger's 
assumptions $2k-n \ge 2, k \le n-3$ are more restrictive than our 
$k \le n-3, n \ge 6$.
\end{rem}

\end{document}